\newcommand{\bN} { {\mathbb{N}}}
\newcommand{\bQ} { {\mathbb{Q}}}
\newcommand{\bZ} { {\mathbb{Z}}}
\newcommand{\bF} { {\mathbb{F}}}
\DeclareMathOperator{\Spr}{Spr}
\def\id{\operatorname{id}}
\def\lc{\operatorname{lc}}
\def\lt{\operatorname{lt}}
\def\diag{\operatorname{diag}}
\def\Spr{\operatorname{Spr}}
\def\Const{\operatorname{Const}}
\def\supp{\operatorname{Supp}}
\def\Sol{\operatorname{Sol}}
\newtheorem{thm}{Theorem}[section]
\newtheorem{cor}[thm]{Corollary}
\newtheorem{lem}[thm]{Lemma}
\newtheorem{prop}[thm]{Proposition}
\newtheorem{exam}[thm]{Example}
\newtheorem{problem}[thm]{Problem}
\newenvironment{breakablealgorithm}
  {
   \begin{center}
     \refstepcounter{algorithm}
     \hrule height.8pt depth0pt \kern2pt
     \renewcommand{\caption}[2][\relax]{
       {\raggedright\textbf{\ALG@name~\thealgorithm} ##2\par}%
       \ifx\relax##1\relax 
         \addcontentsline{loa}{algorithm}{\protect\numberline{\thealgorithm}##2}%
       \else 
         \addcontentsline{loa}{algorithm}{\protect\numberline{\thealgorithm}##1}%
       \fi
       \kern2pt\hrule\kern2pt
     }
  }{
     \kern2pt\hrule\relax
   \end{center}
  }
  \def\eatspace#1{#1}
\def\step#1#2{\par\kern1pt\hangindent#2em\hangafter=1\noindent\rlap{\small#1}\kern#2em\relax\eatspace}
\begin{document}
\begin{frontmatter}



\title{Solutions to the First Order Difference Equations in the Multivariate Difference Field}
\author[1]{Lixin Du}\ead{lixindumath@gmail.com}
\author[2]{Yarong Wei\corref{cor1}}\ead{yarongwei@email.tjut.edu.cn}
\address[1]{Institute for Algebra, Johannes Kepler University, Linz, A4040, Austria}
\address[2]{School of Science, Tianjin University of Technology, Tianjin 300384, PR China}
\cortext[cor1]{Corresponding author}

\begin{abstract}
The bivariate difference field provides an algebraic framework for a sequence satisfying a recurrence of order two. Based on this, we focus on sequences satisfying a recurrence of higher order, and consider the multivariate difference field, in which the summation problem could be transformed into solving the first order difference equations.
We then show a criterion for deciding whether the difference equation has a rational solution and present an algorithm for computing one rational solution of such a difference equation, if it exists. Moreover we get the rational solution set of such an equation.
\end{abstract}



\begin{keyword}
Difference Equation \sep Rational Solutions \sep Multivariate Difference Field \sep Recurrence Relation



\end{keyword}

\end{frontmatter}


\section{Introduction}
Karr developed an algorithm for indefinite summation \citep{Karr-1981,Karr-1985} based on the theory of difference fields. He introduced so called $\Pi\Sigma-$fields, in which a sequence satisfying a recurrence of order one can be described. Inspired by Karr's algorithm, Hou and Wei \citep{Hou-Wei-2023} introduced so called bivariate difference fields, in dealing with the summation problem involving a sequence satisfying a recurrence of order two. They transformed the summation problem into solving the first order difference equations in the bivariate difference field, and gave an algorithm to find a rational solution of the bivariate difference field.

Based on this, we focus on sequences satisfying a recurrence of higher order. Let $(\bF,\varphi)$ be a difference field, i.e., $\varphi$ is an automorphism of a field $\mathbb{F}$. We define a \emph{multivariate difference field} extension $(\bF(\alpha_1,\alpha_2,\ldots,\alpha_n),\varphi)$  of $(\mathbb{F},\varphi)$ to be a field extension with
\begin{enumerate}
\item $\alpha_1,\alpha_2,\ldots,\alpha_n$ being algebraically independent over $\mathbb{F}$.
\item $\varphi$ being a field automorphism of $\bF(\alpha_1,\alpha_2,\ldots,\alpha_n)$ and
\[
(\varphi(\alpha_1,\alpha_2,\ldots,\alpha_n))=(\alpha_1,\alpha_2,\ldots,\alpha_n)A,
\]
where $A\in\bF^{n\times n}$ is a diagonalizable matrix over $\bF$.
\end{enumerate}
Then the summation problem involving sequences satisfying a higher order recurrence could be transformed into solving the difference equations in the multivariate difference field.

For example, let $T_n$ be the Tribonacci sequence  which satisfies
\[ T_{n+3} = T_n+T_{n+1}+T_{n+2}, \quad T_0=0,\quad T_1=T_2=1. \]
We consider the difference field $(\mathbb{C}(\alpha_1, \alpha_2,\alpha_3), \sigma)$ with
\[
\varphi|_{\mathbb{C}} = {\rm id}, \quad
(\varphi(\alpha_1,\alpha_2,\alpha_3))=(\alpha_1,\alpha_2,\alpha_3)\left(
  \begin{array}{ccccc}
    0 & 0 &  1 \\
    1 & 0 &  1 \\
    0 & 1 & 1
  \end{array}
\right).
\]
Then $T_n$,$T_{n+1}$,$T_{n+2}$ are represented by $\alpha_1,\alpha_2,\alpha_3$. To find a closed form of the summation $\sum_{n=1}^mT_n$, we try to search a sequence $G_n$ such that $G_{n+1}-G_n=T_n$ for $n\geq1$. This can be transformed into find a solution $g\in\mathbb{C}(\alpha_1, \alpha_2,\alpha_3)$ of the following difference equation
 \[
 \varphi(g(\alpha_1, \alpha_2,\alpha_3))-g(\alpha_1, \alpha_2,\alpha_3)=\alpha_1.
 \]
By using the algorithm in this paper, one can find
\[\varphi\left(\frac{1}{2}\left(-\alpha_1+\alpha_3\right)\right)-\left(\frac{1}{2}\left(-\alpha_1+\alpha_3\right)\right)=\alpha_1.\]
So $G_n=\frac{1}{2}(-T_n+T_{n+2})=\frac{1}{2}(T_{n-1}+T_{n+1})$ for $n\geq1$.
This implies the following identity given in \citep[Theorem 2]{Kilic-2008}:
\[\sum_{n=1}^mT_n=\frac{1}{2}(T_m+T_{m+2}-1).\]

%
Over the past two decades, many efficient algorithms have been developed for summation of various sequences, like hypergeometric terms \citep{Gosper-1978,Zeilberger-1990,Zeilberger-1991}, $q$-hypergeometric terms \citep{Chen-2005,Du-2016,Graev-1992} and $P$-recursive sequences \citep{Abramov-1999}. More results are listed in \citep{Chen-2017}. There were several extensions of Karr's summation algorithm \citep{Karr-1981,Karr-1985} about difference-field-based techniques, and these extensions handled sums where sequences can appear in the denominator. Schneider\citep{Schneider-2001} first observed that Karr's summation algorithm can be used to solve the creative telescoping problem, and he developed Karr's summation algorithm in \citep{Schneider-2008,Schneider-2013}. Bronstein \citep{Bronstein-2000} described the solutions of some difference equations from the point of view of differential fields. Abramov, Bronstein, Petkov\v{s}ek and Schneider \citep{Abramov-2021} gave an algorithm for computing hypergeometric solutions and rational solutions of some difference equations in the so called $\Pi\Sigma^*$-fields.

We focus on the following two multivariate difference fields in this paper.
\begin{enumerate}
\item The multivariate difference fields  $(\bF(\alpha_1,\alpha_2,\ldots,\alpha_n),\varphi)$ with
\begin{align}\label{eq-1-0}
\varphi|_{\bF}=\text{id},\quad(\varphi(\alpha_1,\alpha_2,\ldots,\alpha_n))=(\alpha_1,\alpha_2,\ldots,\alpha_n)\left(
  \begin{array}{ccccc}
    0 & 0 & \cdots & 0 & u_1 \\
    1 & 0 & \cdots & 0 & u_2 \\
    0 & 1 & \cdots & 0 & u_3 \\
\vdots &\vdots &\ddots&\vdots&\vdots\\
    0 & 0 & \cdots & 1 & u_n \\
  \end{array}
\right)
\end{align}
and $u_1\in\bF\setminus\{0\}$, $u_i\in\bF$ for $2\leq i\leq n$, where the above matrix is a diagonalizable matrix over $\bF$ with nonzero eigenvalues.
\item The multivariate difference fields   $(\bF(\alpha_1,\alpha_2,\ldots,\alpha_n),\varphi)$ with
\begin{align}\label{eq-1-1}
\varphi|_{\bF}=\text{id},\quad(\varphi(\alpha_1,\alpha_2,\ldots,\alpha_n))=(\alpha_1,\alpha_2,\ldots,\alpha_n)\left(
  \begin{array}{ccccc}
    \lambda_1 & 0 & \cdots &  0 \\
    0 & \lambda_2 & \cdots & 0 \\
\vdots &\vdots &\ddots&\vdots\\
    0 & 0 & \cdots &\lambda_n \\
  \end{array}
\right)
\end{align}
and $\lambda_i\in\bF\setminus\{0\}$ for $1\leq i\leq n$.
 \end{enumerate}
Let $(\bF(\alpha_1,\alpha_2,\ldots,\alpha_n),\varphi)$ be a multivariate difference field. Given a nonzero $c\in\bF$ and a rational function $f\in\bF(\alpha_1,\alpha_2,\ldots,\alpha_n)$, we shall find all rational solutions $g$ in $\bF(\alpha_1,\alpha_2,\ldots,\alpha_n)$ of the difference equation
\begin{align}\label{eq-1-2}
c\varphi(g)-g=f.
\end{align}

One feature of the above consideration is that the constant field $\Const_{\varphi}\bF(\alpha_1,\alpha_2,\ldots,\alpha_n)=\{f\in\bF(\alpha_1,\alpha_2,\ldots,\alpha_n)|\varphi
(f)=f\}$ of the multivariate difference field $(\bF(\alpha_1,\alpha_2,\ldots,\alpha_n),\varphi)$ could be larger than $\bF$. Instead of finding a degree bound for polynomial solutions and a universal denominator for rational solutions (which may not exist), we find the structure of the rational solution set of the difference equation \eqref{eq-1-2}. This extends the results in the bivariate difference field \citep{Hou-Wei-2023}.
In particular, when $\lambda_1=\lambda_2=\cdots=\lambda_n$ in \eqref{eq-1-1}, solving the difference equation \eqref{eq-1-2} could be regarded as a generalized for the $q$-summability problem of rational functions \citep{Chen-Singer-2014}.

The outline of this article is as follows.
In Section \ref{sec-2}, we character the constant field and present an algorithm for computing the spread set of any two polynomials in the multivariate difference field.
In Section \ref{sec-3}, we show a necessary and sufficient condition on the existence of rational solutions of the difference equation \eqref{eq-1-2},
and present an algorithm \footnote{\texttt{The SageMath package is available in \url{https://github.com/LixinDu/difference_field}}.} for computing one rational solution of such an equation, if it exists.
In Section \ref{sec-4}, we show a relationship between the rational solution set of the difference equation and the constant field, so that the rational solution set can be constructed.

\section{Multivariate Difference Field}\label{sec-2}
Let $\overline{\bQ}$ be the algebraic closure of $\bQ$, and $\bF$ be a subfield of $\overline{\bQ}$.
Let $\bF(\alpha_1,\alpha_2,\ldots,\alpha_n)$ be the field of rational functions in $n$ variables $\alpha_1,\alpha_2,\ldots,\alpha_n$, and  $(\bF(\alpha_1,\alpha_2,\ldots,\alpha_n),\varphi)$ be a multivariate difference field extension of the difference field $(\bF,\varphi)$.
Let $\bF[\alpha_1,\alpha_2,\ldots,\alpha_n]$ denote the set of polynomials in $n$ variables $\alpha_1,\alpha_2,\ldots,\alpha_n$. For a polynomial $f\in\bF[\alpha_1,\ldots,\alpha_n]$, let $\deg f$ denote the \emph{total degree} of variables $\alpha_1,\ldots,\alpha_n$, and $\deg_{\alpha_i}f$ denote the degree of $f$ in the variable $\alpha_i$ for $i=1,2,\ldots, n$. Let $\lt(f)$ and $\lc(f)$ denote the leading term and the coefficient of the leading term of $f$ under a fixed monomial order, respectively. A polynomial $f$ is said to be \emph{monic} if $\lc(f)=1$. We use the graded lexicographical ordering with $\alpha_1\succ\cdots\succ\alpha_n$ in this paper.

\subsection{A difference isomorphism}

Recall that two difference fields $(\bF_1,\varphi_1)$ and $(\bF_2,\varphi_2)$ are \emph{isomorphic}, if there exists a field isomorphism $\tau:\bF_2\rightarrow\bF_1$ such that
$\varphi_1\tau=\tau\varphi_2$.

\begin{thm}\label{thm-2-1}
Let $(\bF(\alpha_1,\alpha_2,\ldots,\alpha_n),\varphi)$ be a multivariate difference field with $\varphi|_{\bF}=\id$ and $(\varphi(\alpha_1,\alpha_2,\ldots,\alpha_n))=(\alpha_1,\alpha_2,\ldots,\alpha_n)A$, where $A$ is a diagonalizable matrix over $\bF$ with nonzero eigenvalues $\lambda_1$, $\lambda_2$, $\ldots$, $\lambda_n$.
Then
the multivariate difference field $(\bF(\alpha_1,\alpha_2,\ldots,\alpha_n),\varphi)$ and the multivariate difference field $(\bF(\alpha_1,\alpha_2,\ldots,\alpha_n),\sigma)$ are isomorphic, where
\begin{align*}
\sigma|_{\bF}=\id \ {\rm and }\ (\sigma(\alpha_1,\alpha_2,\ldots,\alpha_n))
=(\alpha_1,\alpha_2,\ldots,\alpha_n)\diag(\lambda_1,\lambda_2,\ldots,\lambda_n).
\end{align*}
\end{thm}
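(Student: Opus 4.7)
The plan is to exploit the diagonalizability of $A$ directly. Since $A \in \bF^{n \times n}$ is diagonalizable over $\bF$ with eigenvalues $\lambda_1, \ldots, \lambda_n$, there is an invertible matrix $P \in \bF^{n \times n}$ with $P^{-1} A P = \diag(\lambda_1, \ldots, \lambda_n) =: D$. I will use $P$ to perform a linear change of generators that conjugates the action of $\varphi$ into a diagonal one, and then package this change of variables as a difference field isomorphism.

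First I would set $(\beta_1, \ldots, \beta_n) := (\alpha_1, \ldots, \alpha_n) P$ inside $\bF(\alpha_1, \ldots, \alpha_n)$. Because $P$ is invertible over $\bF$, the relation $(\alpha_1, \ldots, \alpha_n) = (\beta_1, \ldots, \beta_n) P^{-1}$ shows that $\bF[\beta_1, \ldots, \beta_n] = \bF[\alpha_1, \ldots, \alpha_n]$, so the $\beta_i$ are algebraically independent over $\bF$ and generate the same field as the $\alpha_i$. A one-line computation then gives
\[
\varphi(\beta_1, \ldots, \beta_n) = \varphi(\alpha_1, \ldots, \alpha_n)\, P = (\alpha_1, \ldots, \alpha_n)\, A P = (\beta_1, \ldots, \beta_n)\, D,
\]
so that $\varphi(\beta_i) = \lambda_i \beta_i$ for every $i$.

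Next I would define $\tau : \bF(\alpha_1, \ldots, \alpha_n) \to \bF(\alpha_1, \ldots, \alpha_n)$ by $\tau|_\bF = \id$ and $\tau(\alpha_i) = \beta_i$, extended multiplicatively to rational functions. Since the $\beta_i$ are algebraically independent over $\bF$ and generate the whole field, $\tau$ is a well-defined field automorphism. To check that $\tau$ intertwines $\sigma$ and $\varphi$, it suffices to verify $\varphi \circ \tau = \tau \circ \sigma$ on $\bF$ and on the generators: on $\bF$ both sides act as the identity, and on $\alpha_i$ we have $\varphi(\tau(\alpha_i)) = \varphi(\beta_i) = \lambda_i \beta_i = \tau(\lambda_i \alpha_i) = \tau(\sigma(\alpha_i))$. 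The only point requiring any care is to ensure that $P$ is chosen over $\bF$ itself rather than over some algebraic extension; this is automatic from the hypothesis that $A$ is diagonalizable \emph{over} $\bF$, which supplies the eigenvectors in $\bF^n$. There is no substantive obstacle beyond this, and the proof is essentially a clean translation of the linear-algebra fact into the language of difference fields.
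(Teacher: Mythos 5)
Your proposal is correct and follows essentially the same route as the paper: your matrix $P$ is precisely the paper's matrix of eigenvectors $(X_1,\ldots,X_n)$, and your $\tau$ with $\tau(\alpha_i)=\beta_i$ and the verification $\varphi\circ\tau=\tau\circ\sigma$ on $\bF$ and on the generators is exactly the paper's intertwining argument. The only difference is that you spell out the well-definedness details (algebraic independence of the $\beta_i$ and that they generate the field), which the paper leaves implicit.
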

\begin{proof}
Let $X_1, X_2,\ldots, X_n$ be the eigenvectors corresponding to the eigenvalues $\lambda_1$, $\lambda_2$, $\ldots$, $\lambda_n$ of matrix $A$.
Define a field isomorphism $\tau:\bF(\alpha_1,\alpha_2,\ldots,\alpha_n)\rightarrow\bF(\alpha_1,\alpha_2,\ldots,\alpha_n)$ with
\[\tau|_{\bF}=\id \ {\rm and}\ (\tau(\alpha_1,\alpha_2,\ldots,\alpha_n))=(\alpha_1,\alpha_2,\ldots,\alpha_n)(X_1,X_2,\ldots,X_n).\]
Then
\[(X_1,X_2,\ldots,X_n)^{-1}A(X_1,X_2,\ldots,X_n)=\diag(\lambda_1,\lambda_2,\ldots,\lambda_n),\]
which implies
\[\varphi\tau=\tau\sigma \ {\rm and}\  \tau^{-1}\varphi=\sigma\tau^{-1}.\]
\end{proof}

\begin{cor} Let
\[
A=\{g\in\bF(\alpha_1,\alpha_n,\ldots,\alpha_n)|c\varphi(g)-g=f\}
\]
and
\[B=\{g\in\bF(\alpha_1,\alpha_n,\ldots,\alpha_n)|c\sigma(g)-g=\tau^{-1}(f)\},
\]
where $c\in\bF\setminus\{0\}$, $f\in\bF(\alpha_1,\alpha_n,\ldots,\alpha_n)$ and $\varphi$, $\sigma$ are defined in Theorem \ref{thm-2-1}. Then there is a one to one correspondence between sets $A$ and $B$.
\end{cor}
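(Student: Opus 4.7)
The plan is to use the isomorphism $\tau$ from Theorem \ref{thm-2-1} as the explicit bijection: I expect that the map $\Phi\colon A\to B$ defined by $g\mapsto \tau^{-1}(g)$ will be the desired correspondence, with inverse $\Psi\colon B\to A$ given by $h\mapsto \tau(h)$. Since $\tau$ is a field isomorphism of $\bF(\alpha_1,\ldots,\alpha_n)$, both maps are automatically bijections of the ambient field, so the only content of the statement is that they restrict to bijections between the two solution sets $A$ and $B$.

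First I would take $g\in A$, so $c\varphi(g)-g=f$, and apply $\tau^{-1}$ to both sides. Using $\tau^{-1}|_{\bF}=\id$ (so that $\tau^{-1}(c)=c$), the relation $\tau^{-1}\varphi=\sigma\tau^{-1}$ proved in Theorem \ref{thm-2-1}, and the additivity of $\tau^{-1}$, the equation transforms into
\[
c\sigma(\tau^{-1}(g))-\tau^{-1}(g)=\tau^{-1}(f),
\]
which shows $\tau^{-1}(g)\in B$. Thus $\Phi$ is well-defined. Symmetrically, for $h\in B$ I would apply $\tau$ to $c\sigma(h)-h=\tau^{-1}(f)$ and invoke the companion identity $\varphi\tau=\tau\sigma$ together with $\tau|_{\bF}=\id$ to obtain $c\varphi(\tau(h))-\tau(h)=f$, so $\Psi$ lands in $A$.

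Finally, since $\Phi$ and $\Psi$ are the restrictions of the mutually inverse field isomorphisms $\tau^{-1}$ and $\tau$, the identities $\Psi\circ\Phi=\id_A$ and $\Phi\circ\Psi=\id_B$ are automatic, giving the desired one-to-one correspondence. I do not anticipate any serious obstacle here: the proof is essentially a direct transport of structure along $\tau$, and every ingredient (the commutation relation between $\tau$ and the two automorphisms, the fact that $\tau$ fixes $\bF$ pointwise, and the bijectivity of $\tau$ on the whole field) is already contained in Theorem \ref{thm-2-1}. The only care needed is to keep the direction of $\tau$ versus $\tau^{-1}$ straight when translating the right-hand side $f$ into $\tau^{-1}(f)$.
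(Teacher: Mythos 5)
Your proposal is correct and matches the paper's argument: the paper also uses the map $g\mapsto\tau^{-1}(g)$, justified by the relation $\tau^{-1}\varphi=\sigma\tau^{-1}$ from Theorem \ref{thm-2-1} and the fact that $\tau$ fixes $\bF$ (so $c$ is preserved), with the paper phrasing both directions at once as a chain of equivalences rather than checking $\Phi$ and $\Psi$ separately. No gap; the two proofs are essentially identical.
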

\begin{proof}
The correspondence is given by the map $g\mapsto\tau^{-1}(g)$. Indeed, by the definition of $\varphi$ and $\sigma$, we have
\begin{align*}
c\varphi(g)-g=f \Leftrightarrow
\tau^{-1}(c\varphi(g))-\tau^{-1}(g)=\tau^{-1}(f)\Leftrightarrow
c(\tau^{-1}\varphi)(g)-\tau^{-1}(g)=\tau^{-1}(f)\\
\Leftrightarrow
c(\sigma\tau^{-1})(g)-\tau^{-1}(g)=\tau^{-1}(f)
\Leftrightarrow
c\sigma(\tau^{-1})(g)-\tau^{-1}(g)=\tau^{-1}(f).
\end{align*}
\end{proof}
From now on, we only consider the following problem.
\begin{problem}
Let $(\bF(\alpha_1,\alpha_2,\ldots,\alpha_n),\sigma)$ be the multivariate difference field, where
\begin{align}\label{eq-2-0}
\sigma|_{\bF}={\rm id},\ {\rm and}\  (\sigma(\alpha_1,\alpha_2,\ldots,\alpha_n))=(\alpha_1,\alpha_2,\ldots,\alpha_n)\diag(\lambda_1,\lambda_2,\ldots,\lambda_n)
\end{align}
with $\lambda_i\in\bF\setminus\{0\}$. Given $c\in\bF\setminus\{0\}$ and $f\in\bF(\alpha_1,\alpha_2,\ldots,\alpha_n)$, we want to find all rational solutions $g\in\bF(\alpha_1,\alpha_2,\ldots,\alpha_n)$ such that
\begin{align}\label{eq-2-10}
c\sigma(g)-g=f.
\end{align}
\end{problem}

Note that, solving the difference equation \eqref{eq-2-10} with $\lambda_i=\lambda_j (1\leq i\neq j\leq n)$ could be regarded as the $q$-summability problem of rational functions in multivariables.
The criterion of $q$-summability for a rational function in one variable was firstly given by Chen and Singer \citep{Chen-Singer-2012}. For $q$-summability for a bivariate rational function, Chen and Singer show a criterion \citep{Chen-Singer-2014}, and Wang presents an algorithmic proof \citep{Wang-2021}.

\subsection{The constant field}
In a multivariate difference field $(\bF(\alpha_1,\alpha_2,\ldots,\alpha_n),\sigma)$,  we define the following set
\[\Const_{\sigma}\bF(\alpha_1,\alpha_2,\ldots,\alpha_n)=
\{f\in(\bF(\alpha_1,\alpha_2,\ldots,\alpha_n)|\sigma (f)=f\},\]
which is a subfield of $\bF(\alpha_1,\alpha_2,\ldots,\alpha_n)$. We call it the \emph{constant field} of  $(\bF(\alpha_1,\alpha_2,\ldots,\alpha_n),\sigma)$.

Let ${\boldsymbol{\lambda}}=(\lambda_1,\lambda_2,\ldots,\lambda_n)\in\overline{\bQ}^n$.
The \emph{exponent lattice} of these algebraic numbers is defined by
\[
{U_{\boldsymbol{\lambda}}}=\{(i_1,i_2\ldots,i_n)\in\bZ^n|\lambda_1^{i_1}\lambda_2^{i_2}\ldots\lambda_n^{i_n}=1\}.
\]
Then $U_{\boldsymbol{\lambda}}$ is a free $\bZ$-module, and ${U_{\boldsymbol{\lambda}}}$ admits a finite basis.
Ge gave an algorithm for computing a basis of ${U_{\boldsymbol{\lambda}}}$ in his PhD thesis \citep{Ge-1993}, and there are several variants of Ge's algorithm\citep{Manuel-2005,Manuel-2023,Paolo-2014,Sturmfels-1995,Zheng-2020,Zheng-2022,Zheng-2019}.

Now we character the constant field of the multivariate difference field $(\bF(\alpha_1,\alpha_2,\ldots,\alpha_n),\sigma)$ using the exponent lattice of $\lambda_1,\lambda_2,\ldots,\lambda_n$, where $\sigma$ is defined by \eqref{eq-2-0}.

\begin{thm}
Let $(\bF(\alpha_1,\alpha_2,\ldots,\alpha_n),\sigma)$ be the multivariate difference field, which is defined by \eqref{eq-2-0}. Let $r\in\bN$ be the rank of the exponent lattice $U_{\boldsymbol{\lambda}}$.

\begin{enumerate}
\item If $r=0$, then $\Const_{\sigma}\bF(\alpha_1,\alpha_2,\ldots,\alpha_n)=\bF$.
\item If $r>0$, let
$\{\bf{a_1},\bf{a_2},\ldots,\bf{a_r}\}$ be a basis of $U_{\boldsymbol{\lambda}}$ with
${\bf{a_i}}=(a_{i,1}, a_{i,2}, \ldots, a_{i,n})\in\bZ^n$, i.e.,
\begin{align}\label{eq-2-9}
U_{\boldsymbol{\lambda}}=\bZ{\bf{a_1}}\oplus\bZ{\bf{a_2}}\oplus\cdots\oplus\bZ{\bf{a_r}}.
\end{align}
Then
\[\Const_{\sigma}\bF(\alpha_1,\alpha_2,\ldots,\alpha_n)=\bF({\boldsymbol{\alpha}}^{\bf{a_1}},{\boldsymbol{\alpha}}^{\bf{a_2}},\ldots,{\boldsymbol{\alpha}}^{\bf{a_r}}),\]
where ${\boldsymbol{\alpha}}^{\bf{a_i}}=\alpha_1^{a_{i,1}}\alpha_2^{a_{i,2}}\ldots\alpha_n^{a_{i,n}}$ $(i=1,2,\ldots,r)$.
\end{enumerate}

\begin{proof}
Since $\sigma|_{\bF}={\rm id}$, $\bF\subseteq\Const_{\sigma}\bF(\alpha_1,\alpha_2,\ldots,\alpha_n)$.
For each $1\leq i\leq r$, since ${\bf{a_i}}\in U_{\boldsymbol{\lambda}}$,
\[
\sigma({\boldsymbol{\alpha}}^{\bf{a_i}})
={\boldsymbol{\lambda}}^{\bf{a_i}}{\boldsymbol{\alpha}}^{\bf{a_i}}
={\boldsymbol{\alpha}}^{\bf{a_i}}.
\]
Then ${\boldsymbol{\alpha}}^{\bf{a_i}}\in\Const_{\sigma}\bF(\alpha_1,\alpha_2,\ldots,\alpha_n)$.
So $\bF({\boldsymbol{\alpha}}^{\bf{a_1}},{\boldsymbol{\alpha}}^{\bf{a_2}},\ldots,{\boldsymbol{\alpha}}^{\bf{a_r}})$
$\subseteq$ $\Const_{\sigma}\bF(\alpha_1,\alpha_2,\ldots,\alpha_n)$, since $\Const_{\sigma}\bF(\alpha_1,\alpha_2,\ldots,\alpha_n)$ is a field.

On the other hand, suppose $f\in \text{Const}_{\sigma}\bF(\alpha_1,\alpha_2,\ldots,\alpha_n)\setminus\{0\}$, and write $f=\frac{p}{q}$ with $p,q\in\bF[\alpha_1,\alpha_2,\ldots,\alpha_n]$ and $\gcd(p,q)=1$. Then
\begin{align}\label{eq-2-1}
\sigma\left(\frac{p}{q}\right)&=\frac{\sigma p}{\sigma q}=\frac{p}{q},
\end{align}
which implies $p\mid \sigma p$ and $q\mid\sigma q$ because $\gcd(p,q)=1$. Since $\sigma$ does not change the total degree of a polynomial, there exists $c\in\bF\setminus\{0\}$ such that $\sigma p=c\cdot p$ and $\sigma q=c\cdot q$,

Write
\[
p=c_{\bf0}\cdot{\boldsymbol{\alpha}}^{\bf{i_0}}\cdot(1+\sum_{{\bf i}\in\bN^n\setminus\{\bf0\}}c_{\bf{i}}\cdot{\boldsymbol{\alpha}}^{\bf{i}}),
\]
where $c_{\bf0}\in\bF\setminus\{0\}$, $c_{\bf i}\in\bF$, $\bf{i_0}\in\bN^n$. Then by $\sigma p=c\cdot p$, we have
\[
c_{\bf0}\cdot{\boldsymbol{\lambda}}^{\bf{i_0}}\cdot\boldsymbol{\alpha}^{\bf{i_0}}\cdot(1+\sum_{{\bf i}\in\bN^n\setminus\{\bf0\}}c_{\bf i}\cdot{\boldsymbol{\lambda}}^{\bf{i}}\cdot{\boldsymbol{\alpha}}^{\bf{i}})
=c\cdot c_{\bf0}\cdot{\boldsymbol{\alpha}}^{\bf{i_0}}\cdot(1+\sum_{{\bf i}\in\bN^n\setminus\{\bf0\}}c_{\bf i}\cdot{\boldsymbol{\alpha}}^{\bf{i}}).
\]
Comparing the coefficients of both sides yields that
\[\left\{
  \begin{array}{ll}
    {\boldsymbol{\lambda}}^{\bf{i_0}}=c \\
    c_{\bf i}\cdot\boldsymbol{\lambda}^{\bf{i_0}}\cdot \boldsymbol{\lambda}^{\bf{i}}=c\cdot c_{\bf i} \text{ for all } {\bf i}\in\bN^n\setminus\{\bf0\}
  \end{array}
\right.
.
\]
Since $c$ is nonzero, it implies that,
\[\left\{
  \begin{array}{ll}
    {\boldsymbol{\lambda}}^{\bf{i_0}}=c \\
    \text{either } c_{\bf i}=0 \text{ or }\boldsymbol{\lambda}^{\bf{i}}=1 \text{ for all } {\bf i}\in\bN^n\setminus\{\bf0\}
  \end{array}
\right.
.
\]

For $r=0$, we have $\boldsymbol{\lambda}^{\bf{i}}\neq1$ for all
${\bf i}\in\bN^n\setminus\{\bf0\}$, then $c_{\bf i}=0$ and $p=c_{\bf0}\cdot{\boldsymbol{\alpha}}^{\bf{i_0}}$.
Similarly, $q=b_{\bf0}\cdot{\boldsymbol{\alpha}}^{\bf{j_0}}$ for some $b_{\bf0}\in\bF\setminus\{0\}$, $\bf{j_0}\in\bN^n$.
By \eqref{eq-2-1}, we have
\[\frac{c_{\bf0}\cdot{\boldsymbol{\lambda}}^{\bf{i_0}}\cdot{\boldsymbol{\alpha}}^{\bf{i_0}}}{b_{\bf0}\cdot{\boldsymbol{\lambda}}^{\bf{j_0}}\cdot{\boldsymbol{\alpha}}^{\bf{j_0}}}=\frac{c_{\bf0}\cdot{\boldsymbol{\alpha}}^{\bf{i_0}}}{b_{\bf0}\cdot{\boldsymbol{\alpha}}^{\bf{j_0}}},\]
which implies ${\bf i_0=j_0=0}$, and $p/q\in\bF$.
So $\Const_{\sigma}\bF(\alpha_1,\alpha_2,\ldots,\alpha_n)\subseteq\bF$.

For $r\in\bN\setminus\{0\}$ and a fixed $\bf{i}\in\bN^n\setminus\{0\}$, if $c_{\bf i}=0$, then $0=c_{\bf{i}}\cdot{\boldsymbol{\alpha}}^{\bf{i}}\in\bF({\boldsymbol{\alpha}}^{\bf{a_1}},{\boldsymbol{\alpha}}^{\bf{a_2}},\ldots,{\boldsymbol{\alpha}}^{\bf{a_r}})$.
If $c_{\bf i}\neq0$, then $\boldsymbol{\lambda}^{\bf{i}}=1$.
So $\boldsymbol{\lambda}^{\bf{i}}\in U_{\boldsymbol{\lambda}}$ and we can write ${\bf{i}}=\sum_{j=1}^rs_j\bf{a_j}$ with $s_j\in\bZ$. Then
\[c_{\bf{i}}\cdot{\boldsymbol{\alpha}}^{\bf{i}}=c_{\bf{i}}\cdot{\boldsymbol{\alpha}}^{\sum_{j=1}^rs_j\bf{a_j}}
=c_{\bf{i}}\cdot\Pi_{j=1}^r(\boldsymbol{\alpha}^{\bf{a_j}})^{s_j}\in\bF({\boldsymbol{\alpha}}^{\bf{a_1}},{\boldsymbol{\alpha}}^{\bf{a_2}},\ldots,{\boldsymbol{\alpha}}^{\bf{a_r}}).\]
So $p=c_{\bf0}\cdot{\boldsymbol{\alpha}}^{\bf{i_0}}\cdot p_0$ with $p_0\in\bF({\boldsymbol{\alpha}}^{\bf{a_1}},{\boldsymbol{\alpha}}^{\bf{a_2}},\ldots,{\boldsymbol{\alpha}}^{\bf{a_r}})$. Similarly, $q=b_{\bf0}\cdot{\boldsymbol{\alpha}}^{\bf{j_0}}\cdot q_0$ for some $b_{\bf0}\in\bF\setminus\{0\}$, $\bf{j_0}\in\bN^n$ and  $q_0\in\bF({\boldsymbol{\alpha}}^{\bf{a_1}},{\boldsymbol{\alpha}}^{\bf{a_2}},\ldots,{\boldsymbol{\alpha}}^{\bf{a_r}})$.

Since $\bF({\boldsymbol{\alpha}}^{\bf{a_1}},{\boldsymbol{\alpha}}^{\bf{a_2}},\ldots,{\boldsymbol{\alpha}}^{\bf{a_r}})
\subseteq\text{Const}_{\sigma}\bF(\alpha_1,\alpha_2,\ldots,\alpha_n)$, we have $\sigma(p_0)=p_0$, $\sigma(q_0)=q_0$. By \eqref{eq-2-1},
\[\frac{c_{\bf0}\cdot{\boldsymbol{\lambda}}^{\bf{i_0}}\cdot{\boldsymbol{\alpha}}^{\bf{i_0}}\cdot p_0}{b_{\bf0}\cdot{\boldsymbol{\lambda}}^{\bf{j_0}}\cdot{\boldsymbol{\alpha}}^{\bf{j_0}}\cdot q_0}=\frac{c_{\bf0}\cdot{\boldsymbol{\alpha}}^{\bf{i_0}}\cdot p_0}{b_{\bf0}\cdot{\boldsymbol{\alpha}}^{\bf{j_0}}\cdot q_0},\]
which implies $\boldsymbol{\lambda}^{\bf{i_0-j_0}}=1$.
Using the same argument as above, we get
\[\boldsymbol{\alpha}^{\bf{i_0-j_0}}\in\bF({\boldsymbol{\alpha}}^{\bf{a_1}},{\boldsymbol{\alpha}}^{\bf{a_2}},\ldots,{\boldsymbol{\alpha}}^{\bf{a_r}}).
\]
So
\[
f=\frac{c_{\bf0}}{b_{\bf0}}\cdot\boldsymbol{\alpha}^{\bf{i_0-j_0}}\cdot\frac{p_0}{q_0}\in\bF({\boldsymbol{\alpha}}^{\bf{a_1}},{\boldsymbol{\alpha}}^{\bf{a_2}},\ldots,{\boldsymbol{\alpha}}^{\bf{a_r}}),
\]
which implies $\text{Const}_{\sigma}\bF(\alpha_1,\alpha_2,\ldots,\alpha_n)\subseteq\bF({\boldsymbol{\alpha}}^{\bf{a_1}},{\boldsymbol{\alpha}}^{\bf{a_2}},\ldots,{\boldsymbol{\alpha}}^{\bf{a_r}})$.

\end{proof}
\end{thm}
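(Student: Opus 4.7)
The plan is to prove both inclusions of the stated equality. The easy direction $\bF(\boldsymbol{\alpha}^{\mathbf{a}_1},\ldots,\boldsymbol{\alpha}^{\mathbf{a}_r}) \subseteq \Const_\sigma \bF(\alpha_1,\ldots,\alpha_n)$ is immediate: $\sigma|_\bF = \id$ places $\bF$ in the constant field, and for each basis vector $\mathbf{a}_i \in U_{\boldsymbol{\lambda}}$ a direct computation gives $\sigma(\boldsymbol{\alpha}^{\mathbf{a}_i}) = \boldsymbol{\lambda}^{\mathbf{a}_i}\boldsymbol{\alpha}^{\mathbf{a}_i} = \boldsymbol{\alpha}^{\mathbf{a}_i}$; since the constant field is a field containing $\bF$ and each $\boldsymbol{\alpha}^{\mathbf{a}_i}$, it also contains the subfield they generate. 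The case $r=0$ just reduces to the trivial inclusion $\bF \subseteq \Const_\sigma \bF(\alpha_1,\ldots,\alpha_n)$.

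For the reverse inclusion, I would take a nonzero $f \in \Const_\sigma \bF(\alpha_1,\ldots,\alpha_n)$ and write $f = p/q$ with $p,q \in \bF[\alpha_1,\ldots,\alpha_n]$ and $\gcd(p,q)=1$. Because $\sigma$ multiplies each $\alpha_i$ by $\lambda_i \in \bF\setminus\{0\}$, it preserves the total degree of any polynomial and sends every monomial to a nonzero scalar multiple of itself. The equality $\sigma(p)/\sigma(q) = p/q$ together with $\gcd(p,q)=1$ forces $p \mid \sigma(p)$ and $q \mid \sigma(q)$; comparing degrees then yields a single scalar $c \in \bF\setminus\{0\}$ with $\sigma(p) = c\,p$ and $\sigma(q) = c\,q$.

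The heart of the argument is to exploit these eigenequations termwise. Factoring out an anchor monomial $\boldsymbol{\alpha}^{\mathbf{i}_0}$ that appears in $p$ and writing $p = c_{\mathbf{0}}\boldsymbol{\alpha}^{\mathbf{i}_0}\bigl(1 + \sum_{\mathbf{i}\neq\mathbf{0}} c_{\mathbf{i}}\boldsymbol{\alpha}^{\mathbf{i}}\bigr)$, termwise comparison in $\sigma(p) = c\,p$ produces $\boldsymbol{\lambda}^{\mathbf{i}_0} = c$ together with the dichotomy that for each $\mathbf{i}$, either $c_{\mathbf{i}} = 0$ or $\boldsymbol{\lambda}^{\mathbf{i}} = 1$, i.e.\ $\mathbf{i} \in U_{\boldsymbol{\lambda}}$. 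When $r=0$ the lattice is trivial, so every $c_{\mathbf{i}}$ with $\mathbf{i}\neq\mathbf{0}$ vanishes and $p$ is a single monomial; the same argument applied to $q$ together with $\gcd(p,q)=1$ then collapses $p/q$ to an element of $\bF$. When $r>0$, each surviving exponent $\mathbf{i}$ is a $\bZ$-linear combination $\sum_j s_j \mathbf{a}_j$, so $\boldsymbol{\alpha}^{\mathbf{i}} = \prod_j (\boldsymbol{\alpha}^{\mathbf{a}_j})^{s_j}$ lies in the target subfield, and $p$ factors as $c_{\mathbf{0}}\boldsymbol{\alpha}^{\mathbf{i}_0} p_0$ with $p_0 \in \bF(\boldsymbol{\alpha}^{\mathbf{a}_1},\ldots,\boldsymbol{\alpha}^{\mathbf{a}_r})$; similarly $q = b_{\mathbf{0}}\boldsymbol{\alpha}^{\mathbf{j}_0} q_0$. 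A final use of $\sigma(f)=f$, noting that $p_0, q_0$ are already in the constant field by the easy direction, forces $\boldsymbol{\lambda}^{\mathbf{i}_0-\mathbf{j}_0} = 1$, so $\boldsymbol{\alpha}^{\mathbf{i}_0-\mathbf{j}_0}$ also lies in the target subfield and hence $f$ does as well.

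The main technical step is the clean extraction of the lattice condition from $\sigma(p)=c\,p$: the eigenequation is what ties the appearance of a monomial in $p$ to membership of its exponent in $U_{\boldsymbol{\lambda}}$, and factoring out an anchor monomial $\boldsymbol{\alpha}^{\mathbf{i}_0}$ is what lets one state this uniformly rather than only in terms of differences of exponents. Once this lattice condition is in hand, the finite-rank structure of $U_{\boldsymbol{\lambda}}$ provides the desired factorization $p = c_{\mathbf{0}}\boldsymbol{\alpha}^{\mathbf{i}_0} p_0$ essentially for free, and the rest of the proof is routine bookkeeping about how the monomial prefactors in numerator and denominator combine.
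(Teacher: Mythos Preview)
Your proposal is correct and follows essentially the same route as the paper's proof: both directions of the inclusion, the reduction to $\sigma(p)=c\,p$ and $\sigma(q)=c\,q$ via $\gcd(p,q)=1$ and degree preservation, the anchor-monomial factorization of $p$ yielding the dichotomy $c_{\mathbf i}=0$ or $\mathbf i\in U_{\boldsymbol\lambda}$, and the final handling of the prefactor $\boldsymbol\alpha^{\mathbf i_0-\mathbf j_0}$ all match the paper step for step. The only cosmetic difference is that in the $r=0$ case the paper explicitly invokes $\sigma(f)=f$ once more to force $\boldsymbol\lambda^{\mathbf i_0-\mathbf j_0}=1$ (hence $\mathbf i_0=\mathbf j_0$) before appealing to $\gcd(p,q)=1$, whereas you compress this into a single sentence; since you already recorded that $p$ and $q$ share the \emph{same} eigenvalue $c$, this step is implicit in your outline.
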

Note that we use the variant of Ge's algorithm given by Kauers et.al \citep{Manuel-2023} to compute the rank and a basic of $U_{\boldsymbol{\lambda}}$ in our package.

\subsection{The spread computation}
Let $(\bF(\alpha_1,\alpha_2,\ldots,\alpha_n),\sigma)$ be the multivariate difference field, which is defined by \eqref{eq-2-0}.
For any two polynomials $p,q\in\mathbb{F}[\alpha_1,\ldots,\alpha_n]$, let $\Spr_{\sigma}(p, q)$ denote the \emph{spread set} of $p$ and $q$ with respect to $\sigma$, i.e.,
\[\Spr_{\sigma}(p,q)=\{k\in\bZ| \sigma^{k}(p)=cq \text{ for some }c\in\bF\setminus\{0\}\}.\]
If $\Spr_{\sigma}(p, q)\neq\emptyset$, we say $p$ and $q$ are \emph{equivalent with respect to $\sigma$}, denoted by $p\sim_{\sigma}q$. Otherwise $p$ is not equivalent with $q$ and denote by $p\nsim_{\sigma}q$.

In order to compute the spread set $\Spr_{\sigma}(p, q)$ of polynomials $p$ and $q$, we show a result about $\Spr_{\sigma}(p, q)$  and $\Spr_{\sigma}(p, p)$.

\begin{lem}\label{lem-2-1}
Let $p,q\in\mathbb{F}[\alpha_1,\ldots,\alpha_n]$ be two polynomials. Then

$(1)$ $\Spr_{\sigma}(p,p)$ is subgroup of $\mathbb{Z}$.

$(2)$ If $k_0\in \Spr_{\sigma}(p,q)$, then $\Spr_{\sigma}(p, q)=k_0+\Spr_{\sigma}(p, p)$.

\begin{proof}
$(1)$ If $k_1,k_2\in \Spr_{\sigma}(p, p)$, then there exist $c_1,c_2\in\bF\setminus\{0\}$ such that
$\sigma^{k_1}p=c_1p$ and $\sigma^{k_2}p=c_2p$. So
\[p=\sigma^{-k_2}(c_2p)=c_2\sigma^{-k_2}(p)\]
and
\[\sigma^{k_1-k_2}(p)
=\sigma^{k_1}(\sigma^{-k_2}(p))
=\sigma^{k_1}(c_2^{-1}p)
=c_2^{-1}\sigma^{k_1}(p)
=c_2^{-1}c_1p.\]
This implies $k_1-k_2\in\Spr_{\sigma}(p, p)$. Thus $\Spr_{\sigma}(p, p)$ is a subgroup of $\bZ$.

$(2)$
Since $k_0\in \Spr_{\sigma}(p, q)$, there exists $c_0\in\bF\setminus\{0\}$ such that $\sigma^{k_0}(p)=c_0q$.

If $k\in\Spr_{\sigma}(p, q)$, then there exists $c\in\bF\setminus\{0\}$ such that $\sigma^{k}(p)=cq$. Moreover, we have
$\sigma^{k}(p)=cq=cc_0^{-1}(c_0q)=cc_0^{-1}\sigma^{k_0}(p)$.
So
\[\sigma^{k-k_0}(p)=cc_0^{-1}p\]
with $cc_0^{-1}\in\bF\setminus\{0\}$, which implies $k-k_0\in\Spr_{\sigma}(p, p)$.

If $h\in\Spr_{\sigma}(p, p)$, then there exists $d\in\bF\setminus\{0\}$ such that $\sigma^{h}(p)=dp$. Moreover, we have
\[\sigma^{k_0+h}(p)=\sigma^{k_0}(\sigma^h(p))=\sigma^{k_0}(dp)=d\sigma^{k_0}(p)
=dc_0p,\]
and
$dc_0\in\bF\setminus\{0\}$, which implies $k_0+h\in\Spr_{\sigma}(p, q)$.

Therefore $\Spr_{\sigma}(p, q)=k_0+\Spr_{\sigma}(p, p)$.
\end{proof}
\end{lem}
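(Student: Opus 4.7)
The plan is to use the group structure of $\sigma$ (it is an automorphism, hence invertible) to shift the exponents around, and track how the scalar factors in $\bF\setminus\{0\}$ compose.

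For part (1), I would first observe that $\sigma^0(p)=p=1\cdot p$ shows $0\in\Spr_{\sigma}(p,p)$, so the set is nonempty. To verify closure under subtraction, I would take $k_1,k_2\in\Spr_{\sigma}(p,p)$, so that $\sigma^{k_1}(p)=c_1 p$ and $\sigma^{k_2}(p)=c_2 p$ for some $c_1,c_2\in\bF\setminus\{0\}$. Applying $\sigma^{-k_2}$ to the second equation and using $\sigma|_{\bF}=\id$ to pull $c_2$ out gives $\sigma^{-k_2}(p)=c_2^{-1}p$. Then
\[
\sigma^{k_1-k_2}(p)=\sigma^{k_1}\bigl(\sigma^{-k_2}(p)\bigr)=\sigma^{k_1}(c_2^{-1}p)=c_2^{-1}\sigma^{k_1}(p)=c_2^{-1}c_1 p,
\]
which puts $k_1-k_2$ into $\Spr_{\sigma}(p,p)$. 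By the subgroup criterion we are done.

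For part (2), I would prove the two inclusions $\Spr_{\sigma}(p,q)\subseteq k_0+\Spr_{\sigma}(p,p)$ and $k_0+\Spr_{\sigma}(p,p)\subseteq \Spr_{\sigma}(p,q)$ separately, each time reducing to a one-line manipulation. Fix $c_0\in\bF\setminus\{0\}$ with $\sigma^{k_0}(p)=c_0 q$. Given $k\in\Spr_{\sigma}(p,q)$ with $\sigma^k(p)=cq$, combining the two gives $\sigma^k(p)=c c_0^{-1}\sigma^{k_0}(p)$, and then applying $\sigma^{-k_0}$ yields $\sigma^{k-k_0}(p)=cc_0^{-1}p$, so $k-k_0\in\Spr_{\sigma}(p,p)$. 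Conversely, given $h\in\Spr_{\sigma}(p,p)$ with $\sigma^h(p)=dp$, I would compute $\sigma^{k_0+h}(p)=\sigma^{k_0}(dp)=d\sigma^{k_0}(p)=dc_0\, q$, so $k_0+h\in\Spr_{\sigma}(p,q)$.

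There is really no main obstacle here; the entire content is an exercise in the fact that $\sigma$ is a field automorphism fixing $\bF$, so scalars $c\in\bF\setminus\{0\}$ pass through any power of $\sigma$ unchanged. The only bookkeeping care I would take is to track whether the computed scalars are nonzero — and this is automatic since $\bF\setminus\{0\}$ is closed under products and inverses. Thus the proof is a direct verification of the subgroup axioms for (1) and a coset-style double inclusion for (2), with all commutation relations handled by $\sigma|_{\bF}=\id$.
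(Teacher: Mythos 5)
Your proof is correct and follows essentially the same route as the paper: a direct verification of closure under differences for (1), and a coset-style double inclusion for (2), with scalars passing through powers of $\sigma$ because $\sigma|_{\bF}=\id$. (Your final computation even writes the value correctly as $dc_0\,q$, where the paper's displayed line has a small typo reading $dc_0 p$.)
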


For a polynomial $p=\sum_{\bf i} p_{\bf i}\boldsymbol{\alpha}^{\bf i}\in\bF[\alpha_1,
\ldots,\alpha_n]$, where $p_{\bf i}\in\bF, {\bf i}=(i_1,i_2,\ldots,i_n)\in\bN^n$ and ${\boldsymbol{\alpha}}^{\bf{i}}=\alpha_1^{i_1}\alpha_2^{i_2}\ldots\alpha_n^{i_n}$, we denote the \emph{\text{support set}} of polynomial $p$ by $\supp(p)$, i.e.,
\[\supp(p)=\{\boldsymbol{\alpha}^{\bf i}|p_{\bf i}\neq0\}.\]

Let $p,q\in\bF[\alpha_1,
\ldots,\alpha_n]$ be two polynomials.
Without lose of generality, we assume $\supp(p)=\supp(q)$, otherwise $\Spr_{\sigma}(p, q)=\emptyset$.
We may further assume $p, q$ are monic and \begin{align}\label{eq-2-4}
p=\boldsymbol{\alpha}^{\bf m}+\sum_{\boldsymbol{\alpha}^{\bf i}<\boldsymbol{\alpha}^{\bf m}}p_{\bf i}\boldsymbol{\alpha}^{\bf i},\quad q=\boldsymbol{\alpha}^{\bf m}+\sum_{\boldsymbol{\alpha}^{\bf i}<\boldsymbol{\alpha}^{\bf m}}q_{\bf i}\boldsymbol{\alpha}^{\bf i},
\end{align}
where $p_{\bf i}, q_{\bf i}\in\bF$, ${\bf m}=(m_1,m_2,\ldots,m_n),{\bf i}=(i_1,i_2,\ldots,i_n)\in\bN^n$, ${\boldsymbol{\alpha}}^{\bf{m}}=\alpha_1^{m_1}\alpha_2^{m_2}\ldots\alpha_n^{m_n}$, and ${\boldsymbol{\alpha}}^{\bf i}=\alpha_1^{i_1}\alpha_2^{i_2}\ldots\alpha_n^{i_n}$.

\begin{lem}\label{lem-2-2}
Let $(\bF(\alpha_1,\alpha_2,\ldots,\alpha_n),\sigma)$ be the multivariate difference field, which is defined by \eqref{eq-2-0}. Let $p,q\in\mathbb{F}[\alpha_1,\ldots,\alpha_n]$ be two polynomials in \eqref{eq-2-4}.
Then
\[\Spr_{\sigma}(p,q)
=\bigcap_{\boldsymbol{\alpha}^{\bf i}<\boldsymbol{\alpha}^{\bf m}}
\Spr_{\sigma}(\widetilde{p_i}, \widetilde{q_i}),\]
where $\widetilde{p_{\bf i}}=\boldsymbol{\alpha}^{\bf m}+p_{\bf i}\boldsymbol{\alpha}^{\bf i}$ and $\widetilde{q_{\bf i}}=\boldsymbol{\alpha}^{\bf m}+q_{\bf i}\boldsymbol{\alpha}^{\bf i}$.
\begin{proof}
If $k\in\Spr_{\sigma}(p,q)$, then there exists $c\in\bF\setminus\{0\}$ such that $\sigma^k(p)=cq$, i.e.,
\[\sigma^k\left(\boldsymbol{\alpha}^{\bf m}+\sum_{\boldsymbol{\alpha}^{\bf i}<\boldsymbol{\alpha}^{\bf m}}p_{\bf i}\boldsymbol{\alpha}^{\bf i}\right)=c\cdot\left(\boldsymbol{\alpha}^{\bf m}+\sum_{\boldsymbol{\alpha}^{\bf i}<\boldsymbol{\alpha}^{\bf m}}q_{\bf i}\boldsymbol{\alpha}^{\bf i}\right).\]
So for any ${\bf i}$ with $\boldsymbol{\alpha}^{\bf i}<\boldsymbol{\alpha}^{\bf m}$, we have
\[\sigma^k\left(\boldsymbol{\alpha}^{\bf m}+p_{\bf i}\boldsymbol{\alpha}^{\bf i}\right)=c\cdot\left(\boldsymbol{\alpha}^{\bf m}+q_{\bf i}\boldsymbol{\alpha}^{\bf i}\right),\]
which implies
\[k\in\bigcap_{\boldsymbol{\alpha}^{\bf i}<\boldsymbol{\alpha}^{\bf m}}
\Spr_{\sigma}(\widetilde{p_i}, \widetilde{q_i}).\]

If $h\in\bigcap_{\boldsymbol{\alpha}^{\bf i}<\boldsymbol{\alpha}^{\bf m}}
\Spr_{\sigma}(\widetilde{p_i}, \widetilde{q_i})$, then, for any $\boldsymbol{\alpha}^{\bf i}$ with $\boldsymbol{\alpha}^{\bf i}<\boldsymbol{\alpha}^{\bf m}$, there exists $c_i\in\bF\setminus\{0\}$ such that
\[\sigma^h\left(\boldsymbol{\alpha}^{\bf m}+p_{\bf i}\boldsymbol{\alpha}^{\bf i}\right)=\boldsymbol{\lambda}^{\bf m}\boldsymbol{\alpha}^{\bf m}+p_{\bf i}\boldsymbol{\lambda}^{\bf i}\boldsymbol{\alpha}^{\bf i}=c_i\cdot\left(\boldsymbol{\alpha}^{\bf m}+q_{\bf i}\boldsymbol{\alpha}^{\bf i}\right),\]
where $\boldsymbol{\lambda}^{\bf m}=\lambda_1^{m_1}\lambda_2^{m_2}\cdots\lambda_n^{m_n}$. Comparing the coefficient of $\boldsymbol{\alpha}^{\bf m}$ yields that \[c_i=\boldsymbol{\lambda}^{\bf m}.\]
So
\[\sigma^h\left(\boldsymbol{\alpha}^{\bf m}+\sum_{\boldsymbol{\alpha}^{\bf i}<\boldsymbol{\alpha}^{\bf m}}p_{\bf i}\boldsymbol{\alpha}^{\bf i}\right)=\boldsymbol{\lambda}^{\bf m}\cdot\left(\boldsymbol{\alpha}^{\bf m}+\sum_{\boldsymbol{\alpha}^{\bf i}<\boldsymbol{\alpha}^{\bf m}}q_{\bf i}\boldsymbol{\alpha}^{\bf i}\right),\]
which implies
\[h\in
\Spr_{\sigma}(p,q).\]
\end{proof}
\end{lem}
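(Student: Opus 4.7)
The plan is to prove set equality by establishing both inclusions, exploiting the fact that $\sigma$ acts diagonally on monomials via $\sigma(\boldsymbol{\alpha}^{\mathbf{i}}) = \boldsymbol{\lambda}^{\mathbf{i}} \boldsymbol{\alpha}^{\mathbf{i}}$, so the support of any polynomial is preserved under every $\sigma^{k}$ and only the monomial coefficients get rescaled by $\boldsymbol{\lambda}^{k\mathbf{i}}$. This turns questions about the spread into coefficient-wise identities modulo the linear independence of distinct monomials over $\bF$.

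For the forward inclusion ($\subseteq$), I would take $k \in \Spr_{\sigma}(p,q)$ and pick $c \in \bF \setminus \{0\}$ with $\sigma^{k}(p) = c\, q$. Comparing the coefficient of the leading monomial $\boldsymbol{\alpha}^{\mathbf{m}}$, which is $1$ on both sides by the monic assumption, immediately forces $c = \boldsymbol{\lambda}^{k\mathbf{m}}$. Since distinct monomials are linearly independent over $\bF$, the identity $\sigma^{k}(p) = c\, q$ splits into coefficient-wise identities; retaining only the two monomials $\boldsymbol{\alpha}^{\mathbf{m}}$ and $\boldsymbol{\alpha}^{\mathbf{i}}$ for each fixed $\mathbf{i}$ with $\boldsymbol{\alpha}^{\mathbf{i}} < \boldsymbol{\alpha}^{\mathbf{m}}$ yields $\sigma^{k}(\widetilde{p_{\mathbf{i}}}) = c\,\widetilde{q_{\mathbf{i}}}$, so $k$ lies in every $\Spr_{\sigma}(\widetilde{p_{\mathbf{i}}}, \widetilde{q_{\mathbf{i}}})$.

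For the reverse inclusion ($\supseteq$), if $k$ lies in the intersection then for each admissible $\mathbf{i}$ there is a nonzero $c_{\mathbf{i}} \in \bF$ with $\sigma^{k}(\widetilde{p_{\mathbf{i}}}) = c_{\mathbf{i}}\,\widetilde{q_{\mathbf{i}}}$. The crucial step is to compare the coefficient of $\boldsymbol{\alpha}^{\mathbf{m}}$ in each such equation: this forces $c_{\mathbf{i}} = \boldsymbol{\lambda}^{k\mathbf{m}}$ \emph{independently of $\mathbf{i}$}, which is precisely where the monic normalization in \eqref{eq-2-4} is needed. Once all $c_{\mathbf{i}}$ agree with the common value $\boldsymbol{\lambda}^{k\mathbf{m}}$, one can reassemble the coefficient-wise relations across all $\mathbf{i}$ (the shared leading term poses no conflict) to recover $\sigma^{k}(p) = \boldsymbol{\lambda}^{k\mathbf{m}}\, q$, placing $k$ in $\Spr_{\sigma}(p,q)$.

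The only subtlety I anticipate is the uniformity of the constants $c_{\mathbf{i}}$ in the reverse direction; without a shared leading coefficient the individual scaling factors would have no a priori reason to match, and the local relations for each $\widetilde{p_{\mathbf{i}}},\widetilde{q_{\mathbf{i}}}$ could not be glued into a single equation relating $p$ and $q$. Apart from this observation, the argument is routine bookkeeping based on $\sigma$ preserving the support of a polynomial.
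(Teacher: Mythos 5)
Your proposal is correct and follows essentially the same route as the paper: both directions rest on the diagonal action of $\sigma^{k}$ on monomials, and the key point in the reverse inclusion is exactly the one you isolate, namely that comparing the coefficient of $\boldsymbol{\alpha}^{\bf m}$ forces every $c_{\bf i}$ to equal the common value $\boldsymbol{\lambda}^{k\bf m}$ so the per-monomial relations glue into $\sigma^{k}(p)=\boldsymbol{\lambda}^{k\bf m}q$. (Your writing $\boldsymbol{\lambda}^{k\bf m}$ is in fact slightly more careful than the paper's $\boldsymbol{\lambda}^{\bf m}$, which omits the exponent $k$.)
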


\begin{prop}\label{pro-2-3}
Let $(\bF(\alpha_1,\alpha_2,\ldots,\alpha_n),\sigma)$ be the multivariate difference field, which is defined by \eqref{eq-2-0}. Let $p=\boldsymbol{\alpha}^{\bf m}+p_{\bf i}\boldsymbol{\alpha}^{\bf i}$, $q=\boldsymbol{\alpha}^{\bf m}+q_{\bf i}\boldsymbol{\alpha}^{\bf i}$ be two polynomials in $\mathbb{F}[\alpha_1,\ldots,\alpha_n]$ with $p_{\bf i}, q_{\bf i}\in\bF$. Then we can determine $\Spr_{
\sigma}(p,q)$.
\begin{proof}
Assume supp$(p)$=supp$(q)$, otherwise $\Spr_{
\sigma}(p,q)=\emptyset$. Also assume $q_{\bf i},p_{\bf i}\neq0$, otherwise $\Spr_{
\sigma}(p,q)=\Spr_{\sigma}(\boldsymbol{\alpha}^{\bf m},\boldsymbol{\alpha}^{\bf m})=\bZ$.

If there exists $k\in\Spr_{
\sigma}(p,q)$, then there exists $c\in\bF\setminus\{0\}$ such that $\sigma^k(p)=c q$. Note that $c$ is uniquely determined by $k$. We have
\[
\boldsymbol{\lambda}^{k\bf m}\cdot\boldsymbol{\alpha}^{\bf m}+p_{\bf i}\cdot\boldsymbol{\lambda}^{k{\bf i}}\cdot\boldsymbol{\alpha}^{\bf i}=c\left(\boldsymbol{\alpha}^{\bf m}+q_{\bf i}\boldsymbol{\alpha}^{\bf i}\right).
\]
Comparing the coefficients of different terms, we get
\begin{align}\label{eq-2-2}
\left\{
  \begin{array}{ll}
    \boldsymbol{\lambda}^{k\bf m}=c \\
   p_{\bf i}\cdot\boldsymbol{\lambda}^{k{\bf i}}=cq_{\bf i}
  \end{array}
\right.
,
\end{align}
which implies
\[p_{\bf i}/q_{\bf i}=\boldsymbol{\lambda}^{\bf I} \text{ for some } {\bf I}=(I_1, I_2,\ldots,I_n)\in\bZ^n,\]
and if $p_{\bf i}/q_{\bf i}\notin\{\boldsymbol{\lambda}^{\bf I}|{\bf I}=(I_1, I_2,\ldots,I_n)\in\bZ^n\}$, then $\Spr_{\sigma}(p,q)=\emptyset$.

Note that $(I_1,I_2,\ldots,I_n,-1)\in U(\lambda_1,\lambda_2,\ldots,\lambda_n, p_{\bf i}/q_{\bf i})$.
The indices $I_1,I_2,\ldots,I_n$ can be computed by the algorithm about the exponent lattice in \citep{Manuel-2023}.
Substituting the expression of $p_{\bf i}/q_{\bf i}$ into \eqref{eq-2-2} yields that
\[
\boldsymbol{\lambda}^{k\bf (m-i)-{\bf I}}=1.
\]
So
\[
\left(k\bf (m-i)-{\bf I}\right)\in U_{\boldsymbol{\lambda}}.
\]

Assume $U_{\boldsymbol{\lambda}}=\bigoplus_{i=1}^r\bZ{\bf{a_i}}$ for some ${\bf{a_i}}=(a_{i,1},a_{i,2},\ldots, a_{i,n})\in\bZ^n$. Note that $r\leq n$, then
\[\left(k\bf (m-i)-{\bf I}\right)
=\sum_{j=1}^r{\ell}_j\bf{a_j}
\]
for some $\ell_1,\cdots,\ell_r\in\bZ$.
This leads to a linear system with $r+1$ variables and $n$ equations
\begin{align}\label{eq-2-3}
\left(\begin{array}{cccc}
  m_1-i_1 & -a_{1,1} & \cdots & -a_{r,1} \\
m_2-i_2 & -a_{1,2} & \cdots & -a_{r,2} \\
  \vdots & \vdots & \ddots & \vdots \\
  m_n-i_n & -a_{1,n} & \cdots & -a_{r,n}
\end{array}\right)
\left( \begin{array}{c}
    k \\
    \ell_1 \\
    \vdots \\
    \ell_r \\
  \end{array}\right)
=
\left(\begin{array}{c}
    I_1\\
    I_2 \\
    \vdots \\
    I_n \\
  \end{array}\right).
\end{align}
So
\begin{align*}
\text{Spr}_{\sigma}(p,q)
&=\{k\in\bZ|\ \eqref{eq-2-3} \text{ holds  for some } \ell_1,\ldots,\ell_r\in\bZ\}.
\end{align*}
Note that if \eqref{eq-2-3} has no solution in $\bZ^{r+1}$, then $\Spr_{\sigma}(p,q)=\emptyset$.
Also if $p=q$, then $I_1= I_2=\ldots=I_n=0$. So
\begin{align*}
\Spr_{\sigma}(p,p)
&=\{k\in\bZ|\ \text{ associated homogeneous system of \eqref{eq-2-3} holds  for some } \ell_1,\ldots,\ell_r\in\bZ\}.
\end{align*}

According to Lemma \ref{lem-2-1}, $\Spr_{\sigma}(p, q)=k_0+\Spr_{\sigma}(p, p)$, where $(k_0, \ell_1, \ldots, \ell_r)\in\bZ^r$ is a solution of \eqref{eq-2-3} for some $\ell_1,\ldots,\ell_r$, and $\Spr_{\sigma}(p, p)$ is a subgroup of $\bZ$.
In order to character the spread set $\Spr_{\sigma}(p, q)$, next it suffices to find $\ell_0\in\bZ$ such that $\ell_0\bZ=\Spr_{\sigma}(p,p)$.

Let $s\in\bN$ denote the rank of the solution space of associated homogeneous system of \eqref{eq-2-3}. When $s=0$, the homogeneous system has the unique solution ${\bf 0}$, then $\text{Spr}_{\sigma}(p,p)=\{0\}$, and $\text{Spr}_{\sigma}(p, q)=k_0$.
When $s\geq1$, let ${{\boldsymbol \ell}_i}=({\ell}_{i0}, {\ell}_{i1},\ldots, {\ell}_{ir})$ be a basis for the solution space for $s\geq 1$ and $i=1,2,\ldots,s$ . Then
\begin{align}\label{eq-2-6}
\text{Spr}_{\sigma}(p,p)
=&\{{\ell_0}\in\bZ|({\ell}_0,{\ell}_1,\ldots,{\ell}_r)\in\sum_{j=1}^s\bZ{\bf {\ell}_i} \text{ for some } {\ell}_1,\ldots,{\ell}_r\in \bZ\}\nonumber\\
=&\{{\ell}_0\in\bZ|({\ell}_0,{\ell}_1,\ldots,{\ell}_r)=\left(\sum_{j=1}^sy_j{\ell}_{j,0},\ldots,\sum_{j=1}^sy_j{\ell}_{j,r}\right) \text{ for some } {\ell}_1,\ldots,{\ell}_r,y_1\ldots,y_r\in \bZ\}\nonumber\\
=&\{{\ell}_0\in\bZ|{\ell}_0=\sum_{j=1}^sy_j{\ell}_{j,0}\text{ for some } y_1\ldots,y_r\in \bZ\}\nonumber\\
=&\gcd({\ell}_{1,0},{\ell}_{2,0},\ldots,{\ell}_{s,0})\bZ,
\end{align}
i.e., for $\ell_0=\gcd(\ell_{1,0},\ell_{2,0},\ldots,\ell_{s,0})$, we have
\[\Spr_{\sigma}(p,p)=\ell_0\bZ.\]
So
\[\Spr_{\sigma}(p, q)=k_0+\ell_0\bZ.\]
\end{proof}
\end{prop}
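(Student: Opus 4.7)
The plan is to translate the defining condition $\sigma^k(p)=cq$ into a system of conditions on exponent lattices, then show that $k$ is admissible iff an associated linear Diophantine system in $\mathbb{Z}^{r+1}$ has an integer solution, and finally read off the spread set via Lemma \ref{lem-2-1}.

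First I would dispose of the trivial cases: if $\supp(p)\neq\supp(q)$, then (since $\sigma$ preserves the support of any polynomial) no scalar multiple of $q$ can equal $\sigma^k(p)$, so $\Spr_\sigma(p,q)=\emptyset$. If $p_{\bf i}=0=q_{\bf i}$, then $p=q=\boldsymbol{\alpha}^{\bf m}$ and $\sigma^k(p)=\boldsymbol{\lambda}^{k{\bf m}}\boldsymbol{\alpha}^{\bf m}$ gives $\Spr_\sigma(p,q)=\mathbb{Z}$. So assume $p_{\bf i}, q_{\bf i}\in\mathbb{F}\setminus\{0\}$.

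Next I would expand $\sigma^k(p)=cq$ using $\sigma(\boldsymbol{\alpha}^{\bf j})=\boldsymbol{\lambda}^{\bf j}\boldsymbol{\alpha}^{\bf j}$ and compare coefficients on the two monomials $\boldsymbol{\alpha}^{\bf m}$ and $\boldsymbol{\alpha}^{\bf i}$. This produces the two scalar equations in \eqref{eq-2-2}. Eliminating $c$ yields $p_{\bf i}/q_{\bf i}=\boldsymbol{\lambda}^{k({\bf m}-{\bf i})}$. Hence a necessary condition for $\Spr_\sigma(p,q)\neq\emptyset$ is that $p_{\bf i}/q_{\bf i}$ lies in the multiplicative group generated by $\lambda_1,\ldots,\lambda_n$, or equivalently that the exponent lattice $U(\lambda_1,\ldots,\lambda_n,p_{\bf i}/q_{\bf i})$ contains a vector of the form $(I_1,\ldots,I_n,-1)$; if no such vector exists, I would conclude $\Spr_\sigma(p,q)=\emptyset$. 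Otherwise I would fix such an $\mathbf{I}=(I_1,\ldots,I_n)$, so that $p_{\bf i}/q_{\bf i}=\boldsymbol{\lambda}^{\bf I}$, computed via the algorithm of Kauers et al.\ cited in \citep{Manuel-2023}.

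Substituting back reduces the condition on $k$ to $\boldsymbol{\lambda}^{k({\bf m}-{\bf i})-{\bf I}}=1$, i.e.\ $k({\bf m}-{\bf i})-{\bf I}\in U_{\boldsymbol{\lambda}}$. Using a basis $\{\mathbf{a}_1,\ldots,\mathbf{a}_r\}$ of $U_{\boldsymbol{\lambda}}$, this rewrites as the linear Diophantine system \eqref{eq-2-3} with unknowns $(k,\ell_1,\ldots,\ell_r)\in\mathbb{Z}^{r+1}$. Therefore $\Spr_\sigma(p,q)$ is exactly the projection onto the first coordinate of the integer solution set of \eqref{eq-2-3}, which is decidable (Smith normal form or Hermite normal form suffices). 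Taking $p=q$ makes $\mathbf{I}=\mathbf{0}$, so $\Spr_\sigma(p,p)$ is the first-coordinate projection of the associated homogeneous solution set.

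The remaining step is to express $\Spr_\sigma(p,p)$ concretely. By Lemma \ref{lem-2-1}(1), $\Spr_\sigma(p,p)$ is a subgroup of $\mathbb{Z}$, hence of the form $\ell_0\mathbb{Z}$ for a unique $\ell_0\geq0$. I would compute a $\mathbb{Z}$-basis $\{\boldsymbol{\ell}_1,\ldots,\boldsymbol{\ell}_s\}$ of the integer kernel of the homogeneous version of \eqref{eq-2-3} and argue, as in \eqref{eq-2-6}, that the projection onto the first coordinate is generated by $\ell_{1,0},\ldots,\ell_{s,0}$, giving $\ell_0=\gcd(\ell_{1,0},\ldots,\ell_{s,0})$ (with the convention $\gcd$ of the empty set is $0$ when $s=0$). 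Combining with Lemma \ref{lem-2-1}(2) and a particular solution $k_0$ of \eqref{eq-2-3}, I obtain the advertised description $\Spr_\sigma(p,q)=k_0+\ell_0\mathbb{Z}$.

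The main obstacle is algorithmic rather than conceptual: one must have at hand an effective procedure for computing the exponent lattice of algebraic numbers (to extract $\mathbf{I}$ and the basis $\{\mathbf{a}_j\}$) and for computing an integer basis of the solution space of the Diophantine system \eqref{eq-2-3}. Both are available via the references cited in the paper; once they are invoked, the proof reduces to the bookkeeping of equating coefficients and appealing to Lemma \ref{lem-2-1}.
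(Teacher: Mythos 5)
Your proposal is correct and follows essentially the same route as the paper's proof: reduce $\sigma^k(p)=cq$ to the coefficient equations \eqref{eq-2-2}, test $p_{\bf i}/q_{\bf i}\in\{\boldsymbol{\lambda}^{\bf I}\}$ via the exponent lattice, translate to the Diophantine system \eqref{eq-2-3}, and combine a particular solution with the gcd of the first coordinates of a homogeneous basis through Lemma \ref{lem-2-1} to obtain $\Spr_{\sigma}(p,q)=k_0+\ell_0\bZ$. The only cosmetic differences are your explicit mention of Smith/Hermite normal form and the convention $\gcd(\emptyset)=0$, which the paper handles by treating the case $s=0$ separately.
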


Base on Lemma \ref{lem-2-2} and Proposition \ref{pro-2-3}, we give an algorithm for computing $\text{Spr}_{\sigma}(p,q)$ as follows.

\begin{breakablealgorithm}\label{alg:spread}
\caption{An algorithm for computing the spread set of two polynomials}
\begin{algorithmic}[htb]
	\textbf{Input:}  two polynomials $p,q\in\bF[\alpha_1,
\ldots,\alpha_n]$\\
	\textbf{Output:} the spread set $\Spr_{\sigma}(p,q)$
	
    \step 11 if $\supp(p)\neq\supp(q)$, then {\em\bf return} $\Spr_{\sigma}(p,q)=\emptyset$;
    \step 21 if $\supp(p)=\supp(q)$ and $|\supp(p)|=1$, then {\em\bf return} $\Spr_{\sigma}(p,q)=\bZ$;
    \step 31 if $\supp(p)=\supp(q)$ and $|\supp(p)|>1$,
    \step { }2 replace $p$ by $p/\lc(p)$ and $q$ by $q/\lc(q)$, and write $p,q$ as the form in \eqref{eq-2-4},\\
    \step{ }2 let $T=\supp(p)\backslash\{\lt(p)\}$, $k_0=0$, $\ell_0=1$, $c_0=1$, $c_1= \boldsymbol{\lambda}^{\bf m}$.
    \step 42 while $T\neq\emptyset$, do
    \step 53 pick ${\boldsymbol{\alpha}}^{\bf i}={\alpha_1}^{i_1}{\alpha_2}^{i_2}\cdots{\alpha_n}^{i_n}\in T$, and replace $T$ by $T\backslash\{{\boldsymbol{\alpha}}^{\bf i}\}$.
    \step 63 test whether
\begin{align}
\sigma^{k_0}(lt(p)+p_{{\bf i}}{\boldsymbol{\alpha}}^{\bf i})=c_0(lt(q)+q_{{\bf i}}{\boldsymbol{\alpha}}^{\bf i})\label{eq-2-7},\\
\sigma^{\ell_0}(lt(p)+p_{\bf i}{\boldsymbol{\alpha}}^{\bf i})=c_1(lt(p)+p_{\bf i}{\boldsymbol{\alpha}}^{\bf i})\label{eq-2-8}.
\end{align}
    \step 74 if $k_0$ is not a solution of \eqref{eq-2-7} or $\ell_0$ is not a solution of \eqref{eq-2-8}, then
    \step { }5 use Proposition \ref{pro-2-3} to find all $k\in\bZ$ such that
\[\sigma^{k}(lt(p)+p_{{\bf i}}{\boldsymbol{\alpha}}^{\bf i})=c(lt(q)+q_{{\bf i}}{\boldsymbol{\alpha}}^{\bf i})\]
    \step { }5 for some $c\in\bF$.
    \step {8}6 if there is no such $k$, then {\em\bf return} $\emptyset$;
    \step {9}6 else
    \step { }7 write its solution as $k_0'+\ell_0'\bZ$.
    \step {10}7 if $k_0+\ell_0\bZ\cap k_0'+\ell_0'\bZ=\emptyset$, then {\em\bf return} $\emptyset$;
    \step {11}7 else
    \step {12}8 replace $k_0+\ell_0\bZ$ by $(k_0+\ell_0\bZ)\cap(k_0'+\ell_0'\bZ)$;
    \step {  }8 replace $c_0$ by $\boldsymbol{\lambda}^{ k_0{\bf m}}$;
    replace $c_1$ by $\boldsymbol{\lambda}^{\ell_0 {\bf m}}$.
    \step {13}2 {\em\bf return} $\Spr_{\sigma}(p,q)=k_0+\ell_0\bZ$.
\end{algorithmic}
\end{breakablealgorithm}
Algorithm \ref{alg:spread} can be early terminated if $\Spr_{\sigma}(p,q)=\emptyset$. An implementation of the algorithm is a part of the package \texttt{difference\_field}.

Next we show some examples about the spread computation in the multivariate difference field $(\bQ(\alpha_1,\alpha_2,\alpha_3),\sigma)$ with
\[\sigma|_{\bQ}={\rm id},\ {\rm and}\  (\sigma(\alpha_1,\alpha_2,\alpha_3))=(\alpha_1,\alpha_2,\alpha_3)\diag(-1,1/2,-4).
\]
Let $\boldsymbol{\lambda}=(-1,1/2,-4)$, then the exponent lattice $U_{\boldsymbol{\lambda}}$ $=(1,2,1)\bZ\oplus(0,4,2)\bZ$, and $\Const_{\sigma}\bQ(\alpha_1,\alpha_2,\alpha_3)$ $=$ $\bQ(\alpha_1{\alpha_2}^2\alpha_3,{\alpha_2}^4{\alpha_3}^2)$.

\begin{exam}
(1) Let $p=\alpha_1+\alpha_3$, $q=\frac{1}{4}\alpha_1+\alpha_3$. Then $\Spr_{\sigma}(p,q)=\{1\}$ is a single point.
Indeed $\sigma(p)=-\alpha_1-4\alpha_3=-4q$.

(2) Let $p=\alpha_1+\alpha_3$, $q=2{\alpha_2\alpha_3}-1$. Then $\Spr_{\sigma}(p,q)=\emptyset$, since $\supp(p)\neq\supp(q)$.

(3) Let $p=2{\alpha_1}^2\alpha_3-1$. Then $\Spr_{\sigma}(p,p)=2\bZ$. Indeed $\sigma^2(p)=p$, $\sigma(p)=-2{\alpha_1}^2\alpha_3-1\neq 2p$ for all $c\in\bF\setminus\{0\}$.
\end{exam}

\begin{exam}
Let $p={\alpha_2}^2\alpha_3+\alpha_2\alpha_3+1$, $q={\alpha_2}^2\alpha_3+8\alpha_2\alpha_3-1$. Compute the spread set $\Spr_{\sigma}(p,q)$ of $p$ and $q$.
\end{exam}

\begin{proof}
By the definition of $p$ and $q$. we have $\supp(p)=\supp(q)=\{\alpha_2^2\alpha_3, \alpha_2\alpha_3,1\}$ and $\alpha_2^2\alpha_3\succ\alpha_2\alpha_3\succ1$.

Let $T=\supp(p)\backslash\{\lt(p)\}$, $k_0=0$, $l_0=1$, $c_0=1$, $c_1=\lambda_2^{2}\alpha_3$.

Pick $1\in T$, and replace $T$ by $T\backslash\{1\}$.
Let $p_1={\alpha_2}^2\alpha_3+1$, $q_1={\alpha_2}^2\alpha_3-1$.
Since $\sigma^{k_0}(p_1)\neq c_0(q_1)$,
we want to compute all $k\in\bZ$ such that
\[\sigma^{k}(p_1)=cq_1,\]
for some $c\in\bF$. According to Proposition \ref{pro-2-3}, we need to consider the linear system \eqref{eq-2-3}.

Let $m_1=0, m_2=2, m_3=1$, $i_1=i_2=i_3=0$. And, let $I_1=-1,I_2=0,I_3=0$, since $1/(-1)=(-1)^{-1}(1/2)^0(-4)^0$.
Moreover, by $U=(1,2,1)\bZ\oplus(0,4,2)\bZ$, we have $a_{1,1}=1, a_{1,2}=2, a_{1,3}=1$, and $a_{2,1}=0, a_{2,2}=4, a_{2,3}=2$.
Substituting into the linear system \eqref{eq-2-3}, we have
\begin{align*}
\left(\begin{array}{cccc}
  0 & -1 & 0 \\
2 & -2 & -4 \\
  1& -1 & -2
\end{array}\right)
\left( \begin{array}{c}
    k \\
    \ell_1 \\
    \ell_2 \\
  \end{array}\right)
=
\left(\begin{array}{c}
   -1\\
    0 \\
    0 \\
  \end{array}\right).
\end{align*}
Its solution set in $\bZ^3$ is $(1,1,0)+(2,0,1)\bZ$, so $\Spr_{\sigma}(p_1, q_1)=1+2\bZ$.

Update $k_0=1$, $l_0=2$.
Then $\sigma(p_1)=c_0q_1$, $\sigma^{2}(p_1)=c_1(p_1)$, where $c_0=((1/2)^{2}(-4))^{k_0}=-1$, $c_1=((1/2)^{2}(-4))^{l_0}=1$.

Pick $\{\alpha_2\alpha_3\}\in T$, and replace $T$ by $T\backslash\{\alpha_2\alpha_3\}$. Let $p_2={\alpha_2}^2\alpha_3+{\alpha_2}\alpha_3$, $q_1={\alpha_2}^2\alpha_3+8{\alpha_2}\alpha_3$.
Since $\sigma^{k_0}(p_2)\neq c_0(q_2)$,
we want to compute all $k\in\bZ$ such that
\[\sigma^{k}(p_2)=cq_2.\]
for some $c\in\bF$. According to Proposition \ref{pro-2-3}, we need to consider the linear system \eqref{eq-2-3}.

Let $m_1=0, m_2=2, m_3=1$, $i_1=0, i_2=1, i_3=1$. And, let $I_1=0,I_2=-1,I_3=-2$, since $1/8=(-1)^{0}(1/2)^{-1}(-4)^{-2}$.
Moreover, by $U=(1,2,1)\bZ\oplus(0,4,2)\bZ$, we have $a_{1,1}=1, a_{1,2}=2, a_{1,3}=1$, and $a_{2,1}=0, a_{2,2}=4, a_{2,3}=2$.
Substituting into the linear system \eqref{eq-2-3}, we have
\begin{align*}
\left(\begin{array}{cccc}
  0 & -1 & 0 \\
1 & -2 & -4 \\
  0& -1 & -2
\end{array}\right)
\left( \begin{array}{c}
    k \\
    \ell_1 \\
    \ell_2 \\
  \end{array}\right)
=
\left(\begin{array}{c}
   0\\
    -1 \\
    -2 \\
  \end{array}\right).
\end{align*}
It has a unique solution in $\bZ^3$ is $(3,0,1)$, so $\Spr_{\sigma}(p_2, q_2)=\{3\}$.

We have $\Spr_{\sigma}(p_1, q_1)\cap\Spr_{\sigma}(p_2, q_2)=(1+2\bZ)\cap\{3\}={3}$.
Update $k_0=3$, $l_0=0$.
Then $\sigma^3(p_2)=c_0q_2$, $\sigma^{0}(p_2)=c_1(p_2)$, where $c_0=((1/2)^{2}(-4))^{k_0}=-1$, $c_1=((1/2)^{2}(-4))^{l_0}=1$.

Return $\Spr_{\sigma}(p,q)=\{3\}$, since $T=\emptyset$.
\end{proof}

\section{The existence of rational solutions}\label{sec-3}
In this section, let $(\bF(\alpha_1,\alpha_2,\ldots,\alpha_n),\sigma)$ be the multivariate difference field, which is defined by \eqref{eq-2-0}, $c\in\bF\setminus\{0\}$ and $f\in\bF(\alpha_1,\alpha_2,\ldots,\alpha_n)$, we consider the $c\sigma$-summability problem of $f$ in $(\bF(\alpha_1,\alpha_2,\ldots,\alpha_{n})$, $\sigma)$, i.e, whether there exists $g\in\bF(\alpha_1, \alpha_2,\ldots,\alpha_n)$ such that
 \begin{align}\label{eq-3-3}
 f=c\sigma(g)-g,
 \end{align}
and how to find one $g$ if such a $g$ exists. We call $f$ is \emph{$c\sigma$-summable} in multivariate difference field $(\bF(\alpha_1,\alpha_2,\ldots,\alpha_{n}),\sigma)$ and denoted by $f=\Delta_{c\sigma}(g)$ when such equation holds.

Let $G=\langle \sigma\rangle=\langle\sigma^i|i\in\bZ\rangle$, for given polynomials $p,q\in\bF[\alpha_1\alpha_2,\ldots,\alpha_n]$,
the set $[p]_G=\{\tau(p)|\tau\in G\}$ is called the \emph{$G$-orbit} of $p$.
Recall that we say $p$ is equivalent with $q$ with respect to $\sigma$, denoted by $p\sim_{\sigma}q$, if $\Spr_{\sigma}(p, q)\neq\emptyset$. Then $[p]_G=[q]_G$ if and only if $p\sim_{\sigma}q$.

We mainly show the existence of rational solutions of the equation \eqref{eq-3-3} by the following three subsections.

%
%

\subsection{Decomposition of rational functions}
Let $f=P/Q\in\bF(\alpha_1,\alpha_2,\ldots,\alpha_{n})$ be a rational function, where $P,Q\in\bF(\alpha_1,\alpha_2,\ldots,\alpha_{n-1})[\alpha_n]$ are relatively prime. Then we can do a factorization
\[Q=c_q\cdot\prod_{i=1}^Iq_i^{k_i},\]
where $c_q\in\bF(\alpha_1,\alpha_2,\ldots,\alpha_{n-1}), I, k_i\in\bN\setminus\{0\}$, and $q_i\in\bF[\alpha_1,\alpha_2,\ldots,\alpha_n]$ are monic pairwise coprime irreducible polynomials with positive degree in $\alpha_n$.
By Algorithm \ref{alg:spread}, we can check whether $q_i\nsim_{\sigma} q_{i'}$ for any $i\neq i'$, and classify all of the polynomials $q_i$ into distinct $G$-orbit,
which leads to a fixed factorization
\[Q=c_q'\cdot\prod_{i=1}^I\prod_{j=1}^{J_i}\sigma^{\ell_{i,j}}(d_i)^{k_{i,j}},\]
where $c_q'\in\bF(\alpha_1,\alpha_2,\ldots,\alpha_{n-1}), I, J_i, k_{i,j}\in\bN\setminus\{0\}, \ell_{i,j}\in\bZ, d_i\in\bF[\alpha_1,\alpha_2,\ldots,\alpha_n]$ are monic irreducible polynomials with positive degree in $\alpha_n$. Moreover, $d_i$ are in distinct $G$-orbits, and for all $1\leq \ell_{i,j}\neq \ell_{i,j'}\leq J_i$, $\sigma^{\ell_{i,j}}(d_i)\neq \sigma^{\ell_{i,j'}}(d_i)$.
Based on this fixed factorization, for any rational function $f=P/Q\in\bF(\alpha_1,\alpha_2,\ldots,\alpha_n)$, it could be written as the irreducible partial faction decomposition of the form
\begin{align}\label{eq-3-1}
f=p+\sum_{j=1}^J\sum_{i=1}^{I_j}\sum_{l=1}^{e_{i,j}}\frac{a_{i,j,l}}{\sigma^{\ell}(d_i)^j},
\end{align}
where $p\in\bF(\alpha_1,\alpha_2,\ldots,\alpha_{n-1})[\alpha_n,\alpha_n^{-1}]$,
$J,I_j,e_{i,j}\in\bN\setminus\{0\}$, and $a_{i,j,l}\in\bF(\alpha_1,\alpha_2,\ldots,\alpha_{n-1})[\alpha_n]$ with $\deg_{\alpha_n}(a_{i,j,l})<\deg_{\alpha_n}(d_i)$.

Note that the decomposition \eqref{eq-3-1} depends on the choice of representatives $d_i$ in distinct $G$-orbits.
In order to obtain a unique decomposition of a rational function, we define one vector space, for given an irreducible polynomial $d\in\bF[\alpha_1,\alpha_2,\ldots,\alpha_{n}]$ and $j\in\bN\setminus\{0\}$.
\begin{align*}
V_{[d]_{G,j}}=\text{span}_{\bF(\alpha_1\alpha_2,\ldots,\alpha_{n-1})}
\left\{\frac{a}{\theta(d)^j}\bigg|
a\in\bF(\alpha_1,\alpha_2,\ldots,\alpha_{n-1})[\alpha_n],
\theta\in G,d\nsim_{\sigma}\alpha_n,\deg_{\alpha_n}(a)<\deg_{\alpha_n}(d) \right\}.
\end{align*}
Also, we definite vector spaces to decompose $p$. For each $i\in\bZ$,
\begin{align*}
V_{i}=\text{span}_{\bF(\alpha_1,\alpha_2,\ldots,\alpha_{n-1})}\left\{\alpha_n^i
\right\}.
\end{align*}
Then any rational function $f$ can be uniquely written in the form
\begin{align}\label{eq-3-2}
f=\sum_{i\in\bZ}p_i\alpha_n^i+\sum_{j=1}^J\sum_{[d]_{G}}f_{[d]_{G,j}},
\end{align}
where $J\in\bN\setminus\{0\}$, $p_i\in\bF(\alpha_1,\alpha_2,\ldots,\alpha_{n-1})$, $p_i\alpha_n^i$ are in distinct $V_i$ space and $f_{[d]_{G,j}}$ are in distinct $V_{[d]_{G,j}}$ space. The decomposition \eqref{eq-3-2} is called an \emph{orbital partial fraction decomposition} of $f\in\bF(\alpha_1,\alpha_2,\ldots,\alpha_n)$ with respect to $\alpha_n$ and $\sigma$.

\subsection{Orbital reduction for summability}
First, we transform the summabilty problem in the multivariate difference field $\bF(\alpha_1,\alpha_2,\ldots,\alpha_{n})$ into the summability problem in vector spaces $V_i$ and $V_{[d]_{G,j}}$.
Let $\Omega=\bZ\cup\{[d]_{G,j}\ |\ d$ is a irreducible polynomial in $ \bF[\alpha_1,\alpha_2,\ldots,\alpha_{n}]$ with $\deg_{\alpha_n}(d)\geq1$ and $d\nsim_{\sigma}\alpha_n,j\in\bN^{+}\}$.

\begin{lem}\label{lem-3-1}
If $f\in V_{\omega}$ for some $\omega\in\Omega$ and $L\in\bF(\alpha_1,\alpha_2,\ldots,\alpha_{n-1})[G]$, then $L(f)\in V_{\omega}$.
\end{lem}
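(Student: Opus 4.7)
The plan is to reduce the statement to verifying that each $V_\omega$ is stable under (i) scalar multiplication by $\bF(\alpha_1,\ldots,\alpha_{n-1})$, which is immediate from the definition of $V_\omega$ as an $\bF(\alpha_1,\ldots,\alpha_{n-1})$-vector space, and (ii) the shifts $\sigma^k$ for every $k\in\bZ$. Any $L\in\bF(\alpha_1,\ldots,\alpha_{n-1})[G]$ is a finite sum $\sum_i c_i\sigma^{k_i}$ with $c_i\in\bF(\alpha_1,\ldots,\alpha_{n-1})$, so once these two closure properties are in hand, $L(f)=\sum_i c_i\sigma^{k_i}(f)\in V_\omega$ follows by linearity.

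For $\omega=i\in\bZ$, a generator of $V_i$ is $c\alpha_n^i$ with $c\in\bF(\alpha_1,\ldots,\alpha_{n-1})$, and
\[
\sigma^k(c\alpha_n^i)=\sigma^k(c)\,\lambda_n^{ki}\,\alpha_n^i.
\]
Because $\sigma$ fixes $\bF$ and scales each $\alpha_j$ by $\lambda_j\in\bF\setminus\{0\}$, the subring $\bF(\alpha_1,\ldots,\alpha_{n-1})$ is $\sigma$-stable, so $\sigma^k(c)\in\bF(\alpha_1,\ldots,\alpha_{n-1})$ and the right-hand side lies in $V_i$. For $\omega=[d]_{G,j}$, a generator has the form $a/\theta(d)^j$ with $\theta\in G$ and $a\in\bF(\alpha_1,\ldots,\alpha_{n-1})[\alpha_n]$ subject to $\deg_{\alpha_n}(a)<\deg_{\alpha_n}(d)$; applying $\sigma^k$ yields
\[
\sigma^k\!\left(\frac{a}{\theta(d)^j}\right)=\frac{\sigma^k(a)}{(\sigma^k\theta)(d)^j},
\]
and $\sigma^k\theta$ still belongs to $G$, so what remains is to check that $\sigma^k(a)$ meets the required numerator conditions.

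The key observation here is that $\sigma$ acts as an $\bF$-algebra automorphism of $\bF[\alpha_1,\ldots,\alpha_n]$ sending each $\alpha_j$ to the scalar multiple $\lambda_j\alpha_j$; consequently it preserves the subring $\bF(\alpha_1,\ldots,\alpha_{n-1})[\alpha_n]$ together with the $\alpha_n$-degree of every polynomial, whence $\sigma^k(a)\in\bF(\alpha_1,\ldots,\alpha_{n-1})[\alpha_n]$ and $\deg_{\alpha_n}(\sigma^k(a))=\deg_{\alpha_n}(a)<\deg_{\alpha_n}(d)$. This confirms that $\sigma^k$ sends generators of $V_{[d]_{G,j}}$ to generators of the same space, completing the reduction. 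No serious obstacle appears; the entire argument simply tracks how the diagonal action $\alpha_j\mapsto\lambda_j\alpha_j$ respects both the $\alpha_n$-degree filtration and the $G$-orbit structure that defines $V_\omega$.
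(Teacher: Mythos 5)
Your proof is correct and follows essentially the same route as the paper: write $L$ as a finite $\bF(\alpha_1,\ldots,\alpha_{n-1})$-linear combination of powers of $\sigma$ and verify termwise that each $V_i$ and each $V_{[d]_{G,j}}$ is stable, using that $\sigma$ fixes $\bF(\alpha_1,\ldots,\alpha_{n-1})$ up to the diagonal scaling, preserves $\alpha_n$-degrees, and sends $\theta(d)$ to $(\sigma^k\theta)(d)$ within the same $G$-orbit. No gaps; this matches the paper's argument in substance.
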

\begin{proof}
Write $L=\sum_{\theta}p_{\theta}\cdot\theta$ with $p_{\theta}\in\bF(\alpha_1,\alpha_2,\ldots,\alpha_{n-1})$ and $\theta\in G$.

If $f\in V_i$ for $i\in\bZ$, then write $f=f_i\alpha_n^i$ with $f_i\in\bF(\alpha_1,\alpha_2,\ldots,\alpha_{n-1})$. We have
\[
p_{\theta}\cdot\theta(f)=p_{\theta}\cdot\theta(f_i\alpha_n^i)=p_{\theta}\cdot\theta(f_i)\cdot\theta(\alpha_n)^i.
\]
Moreover $\theta(f_i)\in\bF(\alpha_1,\alpha_2,\ldots,\alpha_{n-1})$ and $\theta(\alpha_n)^i\in V_i$, then
$p_{\theta}\cdot\theta(f)\in V_i$.
So $L(f)\in V_i$, since $V_i$ is an $\bF(\alpha_1,\alpha_2,\ldots,\alpha_{n-1})$-vector space.

If $f\in V_{[d]_{G,j}}$, then write $f=\sum_{i}\frac{a_i}{b_i^j}$ with $a_i\in\bF(\alpha_1,\alpha_2,\ldots,\alpha_{n-1})[\alpha_n]$, $b_i\in\bF[\alpha_1,\alpha_2,\ldots,\alpha_n]$, $b_i\sim_{\sigma}d$ and $\deg_{\alpha_n}(a_i)<\deg_{\alpha_n}(d)$. We have
\[
p_{\theta}\cdot\theta(f)=\sum_{i}\frac{p_{\theta}\cdot\theta(a_i)}{\theta(b_i)^j}.\]
Moreover $\theta(b_i)\sim_{\sigma}d$ and $\deg_{\alpha_n}(p_{\theta}\cdot\theta(a_i))<\deg_{\alpha_n}(\theta(b_i)^j)$.
So $p_{\theta}\cdot\theta(f)\in V_{[d]_{G,j}}$ which implies $L(f)\in V_{[d]_{G,j}}$.
\end{proof}

\begin{lem}\label{lem-3-2}
Let $f\in\bF(\alpha_1,\alpha_2,\ldots,\alpha_{n})$ and write $f=\sum_{\omega\in\Omega}f_{\omega}$.
Then $f$ is $c\sigma$-summable in $\bF(\alpha_1,\alpha_2,\ldots,\alpha_{n})$  if and only if each $f_{\omega}$ is $c\sigma$-summable in $V_{\omega}$ for all $\omega\in\Omega$.
\end{lem}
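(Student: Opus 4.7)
The plan is to exploit the uniqueness of the orbital partial fraction decomposition \eqref{eq-3-2} together with the invariance result in Lemma \ref{lem-3-1}. The key observation is that the operator $L := c\sigma - \id$ lies in $\bF(\alpha_1,\alpha_2,\ldots,\alpha_{n-1})[G]$, since $c\in\bF$ and $\id,\sigma\in G$. Hence by Lemma \ref{lem-3-1}, $L$ maps each subspace $V_{\omega}$ into itself for every $\omega\in\Omega$.

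First, I would handle the easy direction. If each $f_{\omega}$ is $c\sigma$-summable in $V_{\omega}$, then there exist $g_{\omega}\in V_{\omega}$ with $f_{\omega}=c\sigma(g_{\omega})-g_{\omega}$. All but finitely many $f_{\omega}$ are zero, so one may take the corresponding $g_{\omega}$ to be zero, and the sum $g:=\sum_{\omega\in\Omega}g_{\omega}$ is a well-defined element of $\bF(\alpha_1,\ldots,\alpha_n)$ satisfying $c\sigma(g)-g=\sum_{\omega}(c\sigma(g_{\omega})-g_{\omega})=\sum_{\omega}f_{\omega}=f$.

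For the converse, suppose $f=c\sigma(g)-g$ for some $g\in\bF(\alpha_1,\ldots,\alpha_n)$, and take the orbital partial fraction decomposition $g=\sum_{\omega\in\Omega}g_{\omega}$ with $g_{\omega}\in V_{\omega}$ (and only finitely many nonzero summands). Applying $L=c\sigma-\id$ term by term,
\begin{equation*}
f=L(g)=\sum_{\omega\in\Omega}L(g_{\omega}),
\end{equation*}
and by Lemma \ref{lem-3-1} each $L(g_{\omega})=c\sigma(g_{\omega})-g_{\omega}$ still lies in $V_{\omega}$. Thus $\sum_{\omega}L(g_{\omega})$ is an orbital partial fraction decomposition of $f$. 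By the uniqueness of \eqref{eq-3-2}, comparing this with the given decomposition $f=\sum_{\omega}f_{\omega}$ yields $f_{\omega}=c\sigma(g_{\omega})-g_{\omega}$ for every $\omega\in\Omega$, which exhibits each $f_{\omega}$ as $c\sigma$-summable in $V_{\omega}$.

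The only delicate point is the uniqueness of the orbital partial fraction decomposition, which is guaranteed by the fact that the various $V_{\omega}$ ($\omega\in\Omega$) are $\bF(\alpha_1,\ldots,\alpha_{n-1})$-subspaces whose internal direct sum exhausts $\bF(\alpha_1,\ldots,\alpha_n)$; this follows from the classical partial fraction decomposition over $\bF(\alpha_1,\ldots,\alpha_{n-1})[\alpha_n]$, regrouped according to $G$-orbits of the irreducible denominators. Once this uniqueness is in place, the entire argument reduces to the single invariance fact already established in Lemma \ref{lem-3-1}.
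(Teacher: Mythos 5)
Your proposal is correct and follows essentially the same route as the paper: decompose $g=\sum_{\omega}g_{\omega}$, note via Lemma \ref{lem-3-1} that $c\sigma(g_{\omega})-g_{\omega}\in V_{\omega}$, and invoke the uniqueness of the orbital partial fraction decomposition to match the components with $f_{\omega}$. The observation that $c\sigma-\id$ lies in $\bF(\alpha_1,\ldots,\alpha_{n-1})[G]$ is exactly how the paper applies that lemma, so there is no substantive difference.
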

\begin{proof}
Assume $f_{\omega}=c\sigma(g_{\omega})-g_{\omega}$ for some $g_{\omega}\in V_{\omega}$. Then
\[f=\sum_{\omega\in\Omega}f_{\omega}=\sum_{\omega\in\Omega}\left(c\sigma(g_{\omega})-g_{\omega}\right)=
c\sigma\left(\sum_{\omega\in\Omega}g_{\omega}\right)-\sum_{\omega\in\Omega}g_{\omega},\]
which implies $f$ is $c\sigma$-summable.

Conversely, assume $f=c\sigma(g)-g$. Write $g=\sum_{\omega\in\Omega}g_{\omega}$ with $g_{\omega}\in V_{\omega}$. Then
\[f=c\sigma\left(\sum_{\omega\in\Omega}g_{\omega}\right)-\sum_{\omega\in\Omega}g_{\omega}=\sum_{\omega\in\Omega}\left(c\sigma(g_{\omega})-g_{\omega}\right).\]
By Lemma \ref{lem-3-1}, we have $\left(c\sigma(g_{\omega})-g_{\omega}\right)\in V_{\omega}$.
Since the orbital partial decomposition of $f$ is unique, $f_{\omega}=c\sigma(g_{\omega})-g_{\omega}$, which implies $f_{\omega}$ is $c\sigma$-summable in $V_{\omega}$.
\end{proof}

Then, based on the orbital partial fraction decomposition of $f$ with respect to $\sigma$,
it suffices to consider the $c\sigma$-summability problems of $p_i\alpha_n^i$ in $V_i$ and $f_{[d]_{G,j}}$ in $V_{[d]_{G,j}}$, respectively.

\subsubsection{Reduction of the $c\sigma$-summability problem of rational functions in $V_{i}$}
We only need consider the $c\sigma$-summability of $f_i(\alpha_1,\ldots,\alpha_{n-1})\alpha_n^i$ in $V_i$ for each fixed $i\in\bZ$.

\begin{lem}\label{lem-3-3}
Let $f\in\bF(\alpha_1,\ldots,\alpha_{n-1})$ and $i\in\bZ$. Then
$f(\alpha_1,\ldots,\alpha_{n-1})\alpha_n^i$ is $c\sigma$-summable in $V_i$ if and only if $f(\alpha_1,\ldots,\alpha_{n-1})$ is $c'\sigma$-summable in $\bF(\alpha_1,\ldots,\alpha_{n-1})$ with $c'=c\lambda_n^i$.
\end{lem}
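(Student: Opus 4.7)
The plan is to exploit the explicit form of elements of $V_i$: by definition $V_i = \mathrm{span}_{\bF(\alpha_1,\ldots,\alpha_{n-1})}\{\alpha_n^i\}$, so every $g \in V_i$ can be written uniquely as $g = h \alpha_n^i$ with $h \in \bF(\alpha_1,\ldots,\alpha_{n-1})$. The key observation is that $\sigma$ restricts to an automorphism of the subfield $\bF(\alpha_1,\ldots,\alpha_{n-1})$, since by \eqref{eq-2-0} we have $\sigma(\alpha_j) = \lambda_j \alpha_j$ for $j \leq n-1$, which never introduces $\alpha_n$. Thus, denoting this restriction still by $\sigma$, it makes sense to speak of $c'\sigma$-summability in $\bF(\alpha_1,\ldots,\alpha_{n-1})$.

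For the forward direction, I would start from an assumed equation $f\,\alpha_n^i = c\sigma(g) - g$ with $g = h \alpha_n^i \in V_i$, and compute directly
\[
c\sigma(h \alpha_n^i) - h\alpha_n^i = c\,\sigma(h)\,\sigma(\alpha_n)^i - h\alpha_n^i = \bigl(c\lambda_n^i \sigma(h) - h\bigr)\alpha_n^i,
\]
using $\sigma(\alpha_n) = \lambda_n \alpha_n$. Equating coefficients of $\alpha_n^i$ (which is legitimate because $\alpha_1,\ldots,\alpha_n$ are algebraically independent over $\bF$, so $\alpha_n^i$ is a free generator of $V_i$ as an $\bF(\alpha_1,\ldots,\alpha_{n-1})$-module) yields $f = c'\sigma(h) - h$ with $c' = c\lambda_n^i$, giving the $c'\sigma$-summability of $f$ in $\bF(\alpha_1,\ldots,\alpha_{n-1})$.

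For the backward direction, I would reverse the calculation: given $h \in \bF(\alpha_1,\ldots,\alpha_{n-1})$ with $c'\sigma(h) - h = f$, set $g := h \alpha_n^i$. Then $g \in V_i$ by construction, and the same computation above produces $c\sigma(g) - g = (c\lambda_n^i \sigma(h) - h)\alpha_n^i = f \alpha_n^i$, proving summability in $V_i$.

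There is essentially no hard step here; the statement is really a bookkeeping lemma that lets us push the $\alpha_n^i$ factor through $\sigma$ at the cost of replacing $c$ by $c\lambda_n^i$. The only thing that deserves an explicit mention is the fact that coefficient comparison in the basis $\{\alpha_n^i\}$ is valid, which follows from the algebraic independence assumption built into the definition of a multivariate difference field.
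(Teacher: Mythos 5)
Your proposal is correct and follows essentially the same route as the paper: a direct computation of $c\sigma(h\alpha_n^i)-h\alpha_n^i=(c\lambda_n^i\sigma(h)-h)\alpha_n^i$ and then cancellation of the $\alpha_n^i$ factor, which the paper justifies by $f,g$ being free of $\alpha_n$ and you justify (equivalently) by algebraic independence. Your writeup only differs in spelling out the trivial sufficiency direction that the paper omits.
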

\begin{proof}
It is trivial for the sufficiency.  
For the necessity, assume $f(\alpha_1,\ldots,\alpha_{n-1})\alpha_n^i$ is $c\sigma$-summable in $V_i$. Then $f(\alpha_1,\ldots,\alpha_{n-1})\alpha_n^i=c\sigma(g(\alpha_1,\ldots,\alpha_{n-1})\alpha_n^i)-g(\alpha_1,\ldots,\alpha_{n-1})\alpha_n^i$ for some $g(\alpha_1,\ldots,\alpha_{n-1})$ $\in$ $\bF(\alpha_1,\ldots,\alpha_{n-1})$,
and
\begin{align*}
f(\alpha_1,\ldots,\alpha_{n-1})\alpha_n^i&=c\lambda_n^i\sigma(g(\alpha_1,\ldots,\alpha_{n-1}))\alpha_n^i-g(\alpha_1,\ldots,\alpha_{n-1})\alpha_n^i.
\end{align*}
Since both $f$ and $g$ are free of $\alpha_n$, it follows that
\begin{align*}
f(\alpha_1,\ldots,\alpha_{n-1})&=c\lambda_n^i\sigma(g(\alpha_1,\ldots,\alpha_{n-1}))-g(\alpha_1,\ldots,\alpha_{n-1}),
\end{align*}
which implies $f(\alpha_1,\ldots,\alpha_{n-1})$ is $c'\sigma$-summable in $\bF(\alpha_1,\ldots,\alpha_{n-1})$ with $c'=c\lambda_n^i$.
\end{proof}

Note that we reduce the summablity problem in $V_i$ with $n$ variables to the summability problem in $V_i$ with $n-1$ variables. Now we consider the summability problem in $V_i$ with $n=1$.

\begin{thm}
For $f\neq0\in\bF$, then the following conditions are equivalent:

(1) $f\alpha_1^i$ is $c\sigma$-summable in $V_i$;

(2) $f$ is $c'\sigma$-summable in $\bF$ with $c'=c\lambda_1^i$;

(3) $\lambda_1^i\neq1/c$.
\end{thm}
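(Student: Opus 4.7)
The plan is to chain the three conditions, using Lemma~\ref{lem-3-3} for $(1)\Leftrightarrow(2)$ and an elementary computation for $(2)\Leftrightarrow(3)$.

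First I would observe that $(1)\Leftrightarrow(2)$ is nothing but the specialization of Lemma~\ref{lem-3-3} to the one-variable setting: taking $n=1$ in that lemma (so the ``lower'' coefficient field $\bF(\alpha_1,\ldots,\alpha_{n-1})$ degenerates to $\bF$ itself and the variable $\alpha_n$ plays the role of $\alpha_1$), the statement says exactly that $f\alpha_1^i$ is $c\sigma$-summable in $V_i$ iff $f\in\bF$ is $c'\sigma$-summable in $\bF$ with $c'=c\lambda_1^i$. No new argument is needed beyond checking that the hypothesis $f\neq 0$ causes no issue in that application.

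For $(2)\Leftrightarrow(3)$, the key point is that $\sigma|_{\bF}=\id$, so for every $g\in\bF$ one has $c'\sigma(g)-g=(c'-1)g$. Consequently the difference equation $f=c'\sigma(g)-g$ collapses to the linear equation $(c'-1)g=f$ in $\bF$. Since $f\neq 0$, this has a solution $g\in\bF$ if and only if $c'-1\neq 0$, in which case $g=f/(c'-1)$. Translating back, $c'\neq 1$ is exactly $c\lambda_1^i\neq 1$, i.e.\ $\lambda_1^i\neq 1/c$, giving the desired equivalence.

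I do not expect any serious obstacle: all the substantive content is already packaged in Lemma~\ref{lem-3-3}, and the new ingredient $(2)\Leftrightarrow(3)$ is just the observation that $\sigma$ acts trivially on $\bF$, so the supposed ``first order difference equation'' over $\bF$ is in fact an algebraic equation whose solvability is determined by a single nonvanishing condition.
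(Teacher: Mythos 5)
Your proposal is correct and follows essentially the same route as the paper: the paper likewise obtains $(1)\Leftrightarrow(2)$ by invoking Lemma~\ref{lem-3-3} (in the degenerate case where the coefficient field is $\bF$ itself), and proves $(2)\Leftrightarrow(3)$ by noting that $\sigma$ fixes $\bF$, so the equation reduces to $(c\lambda_1^i-1)u=f$, solvable for nonzero $f$ exactly when $\lambda_1^i\neq 1/c$.
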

\begin{proof}
 It is obviously (1)$\Leftrightarrow$(2) by the Lemma \ref{lem-3-3}. For the (2)$\Leftrightarrow(3)$, we have
\begin{align*}
f \text{ is } c'\sigma-\text{summable in }\bF\Leftrightarrow f&=c\lambda_i^i\sigma (u)-u\\
&=u(c\lambda_i^i-1)\text{ for some }u\in\bF\\
\Leftrightarrow c&\lambda_i^i-1\neq0\\
\Leftrightarrow\lambda&_i^i\neq1/c.
\end{align*}
\end{proof}


\subsubsection{Reduction of the $c\sigma$-summability problem of rational functions in $V_{[d]_{G,j}}$}

\begin{lem}\label{lem-3-4}
For $f\in V_{[d]_{G,j}}$, write $f=\sum_{\ell\in \bZ}\frac{a_{\ell}}{{\sigma^{\ell}}(d)^j}$, where $a_{\ell}\in\bF(\alpha_1,\alpha_2,\ldots,\alpha_{n-1})[\alpha_n]$, $\deg_{\alpha_n}(a_{\ell})$ $<$ $\deg_{\alpha_n}(d)$. Then $f$ can be decomposed into the form
\[f=\Delta_{c\sigma}(g)+r, \text{ with } r=\frac{a}{d^j},\]
where
$g\in V_{[d]_{G,j}}, a=\sum_{\ell}(c\sigma)^{-\ell}(a_{\ell})$ with $\deg_{\alpha_n}(a)<\deg_{\alpha_n}(d)$.
In particular, $f$ is $c\sigma$-summable in  $V_{[d]_{G,j}}$
if and only if
$r$ is $c\sigma$-summable in $V_{[d]_{G,j}}$.
\end{lem}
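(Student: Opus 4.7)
The plan is a telescoping reduction that collapses every shifted denominator $\sigma^{\ell}(d)^{j}$ into the common denominator $d^{j}$ modulo the image of $\Delta_{c\sigma}$ restricted to $V_{[d]_{G,j}}$. The workhorse identity, obtained by a one-line calculation using $\sigma(c)=c$ (so that $(c\sigma)^{-1}=c^{-1}\sigma^{-1}$ makes sense as a $\bF$-linear operator on rational functions) and the fact that $\sigma$ is a field automorphism, is
\[
\Delta_{c\sigma}\!\left(\frac{(c\sigma)^{-1}(a_{\ell})}{\sigma^{\ell-1}(d)^{j}}\right)
=\frac{a_{\ell}}{\sigma^{\ell}(d)^{j}}-\frac{(c\sigma)^{-1}(a_{\ell})}{\sigma^{\ell-1}(d)^{j}}.
\]
For $\ell>0$ this shifts the $\sigma^{\ell}(d)^{j}$-term, modulo $\Delta_{c\sigma}(V_{[d]_{G,j}})$, into a $\sigma^{\ell-1}(d)^{j}$-term whose numerator has been hit by $(c\sigma)^{-1}$. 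For $\ell<0$ the same identity read in the opposite direction shifts denominators upward and applies $c\sigma$ to the numerator at each step. The $\ell=0$ summand needs no reduction.

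Iterating this one-step reduction $|\ell|$ times yields, for every $\ell\in\bZ$,
\[
\frac{a_{\ell}}{\sigma^{\ell}(d)^{j}}=\Delta_{c\sigma}(g_{\ell})+\frac{(c\sigma)^{-\ell}(a_{\ell})}{d^{j}}
\]
for an explicit $g_{\ell}\in V_{[d]_{G,j}}$; the exponent $-\ell$ is correct in both regimes because $|\ell|=-\ell$ when $\ell<0$ and the direction of telescoping flips sign. Summing over the finitely many indices $\ell$ with $a_{\ell}\neq 0$, I set $g:=\sum_{\ell}g_{\ell}$ and $a:=\sum_{\ell}(c\sigma)^{-\ell}(a_{\ell})$ to obtain $f=\Delta_{c\sigma}(g)+a/d^{j}$. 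Two side conditions then need verification: first, $g\in V_{[d]_{G,j}}$, which is immediate from the shape of each $g_{\ell}$ together with $\sigma^{k}(d)\sim_{\sigma}d$; second, $\deg_{\alpha_{n}}(a)<\deg_{\alpha_{n}}(d)$, which holds because $\sigma(\alpha_{n})=\lambda_{n}\alpha_{n}$ with $\lambda_{n}\in\bF\setminus\{0\}$, so both $\sigma$ and $\sigma^{-1}$ preserve $\deg_{\alpha_{n}}$; hence every $(c\sigma)^{-\ell}(a_{\ell})$ retains the bound $\deg_{\alpha_{n}}<\deg_{\alpha_{n}}(d)$, and the bound is closed under addition.

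The ``in particular'' part is then formal. Since $f-r=\Delta_{c\sigma}(g)$ with $g\in V_{[d]_{G,j}}$, and since Lemma \ref{lem-3-1} applied to the operator $L=c\sigma-1$ shows that $\Delta_{c\sigma}$ stabilises $V_{[d]_{G,j}}$, a preimage $h\in V_{[d]_{G,j}}$ with $\Delta_{c\sigma}(h)=f$ exists iff $h-g\in V_{[d]_{G,j}}$ with $\Delta_{c\sigma}(h-g)=r$ exists, giving the claimed equivalence. The main obstacle I foresee is not conceptual but bookkeeping: keeping sign conventions uniform between the $\ell>0$ and $\ell<0$ telescopes and confirming that the single formula $a=\sum_{\ell}(c\sigma)^{-\ell}(a_{\ell})$ captures both directions correctly, which ultimately reduces to the observation that $(c\sigma)^{k}(x)=c^{k}\sigma^{k}(x)$ for all $k\in\bZ$ because $c\in\bF$ is fixed by $\sigma$.
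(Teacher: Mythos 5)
Your proposal is correct and follows essentially the same route as the paper: the same one-step telescoping identity $\Delta_{c\sigma}\bigl(\frac{(c\sigma)^{-1}(a_{\ell})}{\sigma^{\ell-1}(d)^{j}}\bigr)=\frac{a_{\ell}}{\sigma^{\ell}(d)^{j}}-\frac{(c\sigma)^{-1}(a_{\ell})}{\sigma^{\ell-1}(d)^{j}}$, iterated separately for $\ell>0$ and $\ell<0$, yielding the same explicit $g_{\ell}$ and remainder numerator $(c\sigma)^{-\ell}(a_{\ell})$. Your additional explicit checks (that $\sigma^{\pm1}$ preserves $\deg_{\alpha_n}$ and that the ``in particular'' equivalence follows because $\Delta_{c\sigma}$ stabilises $V_{[d]_{G,j}}$ via Lemma \ref{lem-3-1}) are details the paper leaves implicit, and they are fine.
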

\begin{proof}
It suffices to prove that for each $\ell\in\bZ$, $\frac{a}{\sigma^{\ell}(d)^j}=\Delta_{c\sigma}(g_{\ell})+\frac{(c\sigma)^{-{\ell}}(a)}{d^j}$ for some $g_{\ell}\in V_{[d]_{G,j}}$.

If $\ell=0$, take $g_{\ell}=0$. If ${\ell}l>0$, then
\begin{align*}
\frac{a}{\sigma^{\ell}(d)^j}&=\frac{a}{\sigma^{\ell}(d)^j}-\frac{c^{-1}\sigma^{-1}(a)}{\sigma^{{\ell}-1}(d)^j}+\frac{c^{-1}\sigma^{-1}(a)}{\sigma^{{\ell}-1}(d)^j}\\
&=c\sigma\left(\frac{c^{-1}\sigma^{-1}(a)}{\sigma^{{\ell}-1}(d)^j}\right)-\frac{c^{-1}\sigma^{-1}(a)}{\sigma^{{\ell}-1}(d)^j}+\frac{c^{-1}\sigma^{-1}(a)}{\sigma^{{\ell}-1}(d)^j}\\
&=\Delta_{c\sigma}\left(\frac{c^{-1}\sigma^{-1}(a)}{\sigma^{{\ell}-1}(d)^j}\right)+\frac{c^{-1}\sigma^{-1}(a)}{\sigma^{{\ell}-1}(d)^j}\\
&=\Delta_{c\sigma}\left(\frac{c^{-1}\sigma^{-1}(a)}{\sigma^{{\ell}-1}(d)^j}+\frac{c^{-2}\sigma^{-2}(a)}{\sigma^{{\ell}-2}(d)^j}+\cdots+\frac{c^{-{\ell}}\sigma^{-{\ell}}(a)}{d^j}\right)+\frac{(c\sigma)^{-{\ell}}(a)}{d^j}.
\end{align*}
And for ${\ell}<0$,
\begin{align*}
\frac{a}{\sigma^{\ell}(d)^j}&=\frac{a}{\sigma^{\ell}(d)^j}-\frac{c\sigma(a)}{\sigma^{{\ell}+1}(d)^j}+\frac{c\sigma(a)}{\sigma^{{\ell}+1}(d)^j}\\
&=\frac{a}{\sigma^{\ell}(d)^j}-c\sigma\left(\frac{a}{\sigma^{\ell}(d)^j}\right)+\frac{c\sigma(a)}{\sigma^{{\ell}+1}(d)^j}\\
&=\Delta_{c\sigma}\left(-\frac{a}{\sigma^{\ell}(d)^j}\right)+\frac{c\sigma(a)}{\sigma^{{\ell}+1}(d)^j}\\
&=\Delta_{c\sigma}\left(-\frac{a}{\sigma^{\ell}(d)^j}-\frac{c\sigma(a)}{\sigma^{{\ell}+2}(d)^j}-\cdots-\frac{(c\sigma)^{-{\ell}-1}(a)}{\sigma(d)^j}\right)+\frac{(c\sigma)^{-{\ell}}(a)}{d^j}.
\end{align*}
In summary,
\[\frac{a}{\sigma^{\ell}(d)^j}=\Delta_{c\sigma}(g_{\ell})+\frac{(c\sigma)^{-{\ell}}(a)}{d^j},\]
where
\[
g_l=\left\{
       \begin{array}{ll}
         \sum_{i=0}^{{\ell}-1}\frac{(c\sigma)^{i-{\ell}}(a)}{\sigma^i(d)^j}, & \hbox{if ${\ell}>0$;} \\
         0, & \hbox{if ${\ell}=0$;} \\
         -\sum_{i=0}^{-{\ell}-1}\frac{(c\sigma)^{i}(a)}{\sigma^{{\ell}+i}(d)^j}, & \hbox{if ${\ell}<0$.}
       \end{array}
     \right.
\]
\end{proof}
Then it suffices to consider the $c\sigma$-summability criteria for $\frac{a}{d^j}$ in $V_{[d]_{G,j}}$.

\subsection{$c\sigma$-summability criterion for simple fractions}

For given an irreducible polynomial $d\in\bF[\alpha_1,\alpha_2,\ldots,\alpha_{n}]$ with $d\nsim_{\sigma}\alpha_n$, we call
$\frac{a}{d^j}$ is a \emph{simple fraction} in $\bF(\alpha_1,\alpha_2,\ldots,\alpha_{n})$ , where
 $j\in\bN\setminus\{0\}$, $a\in\bF(\alpha_1,\alpha_2,\ldots,\alpha_{n-1})[\alpha_n]$ and $\deg_{\alpha_n}(a)<\deg_{\alpha_n}(d) $. Note that each $\frac{a}{d^j}$ is in $V_{[d]_{G,j}}$.

Recall that the spread set $\Spr_{\sigma}(d,d)$ is a subgroup of $\bZ$ by Lemma \ref{lem-2-1}. Now we show the $c\sigma$-summability criteria for $\frac{a}{d^j}$ in $V_{[d]_{G,j}}$ with $\Spr_{\sigma}(d,d)=\{0\}$ and $\Spr_{\sigma}(d,d)=k\bZ$ for some $k\in\bN\setminus\{0\}$, respectively.

\begin{thm}\label{thm-3-1}
Let $f=\frac{a}{d^j}\in V_{[d]_{G,j}}$, where  $j\in\bN\setminus\{0\}$, $d\in\bF[\alpha_1,\alpha_2,\ldots,\alpha_{n}]$ with $d\nsim_{\sigma}\alpha_n$, $a\in\bF(\alpha_1,\alpha_2,\ldots,\alpha_{n-1})[\alpha_n]$ and $\deg_{\alpha_n}(a)<\deg_{\alpha_n}(d) $. If $\Spr_{\sigma}(d,d)=\{0\}$, then
$f$ is $c\sigma$-summable in  $\bF(\alpha_1,\ldots,\alpha_{n})$  if and only if $a=0$.
\end{thm}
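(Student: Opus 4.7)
The sufficiency is trivial: if $a=0$, then $f=0 = c\sigma(0)-0$ is $c\sigma$-summable. For the necessary direction, I would use Lemma \ref{lem-3-2} to restrict attention to witnesses lying in $V_{[d]_{G,j}}$: since $f\in V_{[d]_{G,j}}$, Lemma \ref{lem-3-1} ensures that the $V_{[d]_{G,j}}$-component $g_0$ of any summability witness already satisfies $c\sigma(g_0)-g_0=f$. So I may assume $g=\sum_{\ell\in S}\frac{b_\ell}{\sigma^\ell(d)^j}$ is a finite sum with $b_\ell\in\bF(\alpha_1,\ldots,\alpha_{n-1})[\alpha_n]$ and $\deg_{\alpha_n}(b_\ell)<\deg_{\alpha_n}(d)$.

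The key role of the hypothesis $\Spr_\sigma(d,d)=\{0\}$ is to guarantee that the shifts $\{\sigma^\ell(d)\}_{\ell\in\bZ}$ are pairwise coprime. Indeed, each $\sigma^\ell(d)$ is irreducible in $\bF[\alpha_1,\ldots,\alpha_n]$ (since $\sigma$ is a ring automorphism of the polynomial ring fixing $\bF$), and a shared factor between $\sigma^\ell(d)$ and $\sigma^{\ell'}(d)$ would make them $\bF^*$-scalar multiples, forcing $\ell-\ell'\in\Spr_\sigma(d,d)=\{0\}$. Shifting the index in the expansion yields
$$c\sigma(g)-g \;=\; \sum_{\ell}\frac{c\sigma(b_{\ell-1})-b_\ell}{\sigma^\ell(d)^j} \;=\; \frac{a}{d^j},$$
and uniqueness of partial fractions (valid because each numerator has $\alpha_n$-degree strictly less than $\deg_{\alpha_n}(d)$ while the denominators are pairwise coprime prime powers) gives
$$c\sigma(b_{-1})-b_0=a,\qquad b_\ell=c\sigma(b_{\ell-1})\ \text{for every}\ \ell\neq 0.$$

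Finally, I would invoke the finite-support property of $g$. The relation $b_\ell=c\sigma(b_{\ell-1})$ for $\ell\geq 1$ would propagate any nonzero $b_0$ to infinitely many nonzero $b_1,b_2,\ldots$, so $b_0=0$; symmetrically, writing $b_{\ell-1}=c^{-1}\sigma^{-1}(b_\ell)$ for $\ell\leq -1$ forces $b_{-1}=0$. Hence $a=c\sigma(b_{-1})-b_0=0$. The principal obstacle is the partial-fraction uniqueness step, which stands or falls on the coprimality of distinct shifts of $d$ --- this is precisely what the hypothesis $\Spr_\sigma(d,d)=\{0\}$ provides, and without it the analogous proof in the case $\Spr_\sigma(d,d)=k\bZ$ will have to be substantially adjusted.
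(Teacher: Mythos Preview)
Your proof is correct and follows essentially the same route as the paper: reduce to a witness $g\in V_{[d]_{G,j}}$, use $\Spr_\sigma(d,d)=\{0\}$ to make the shifts $\sigma^\ell(d)$ pairwise coprime, and then exploit uniqueness of the partial-fraction expansion together with the finite support of $g$. The only cosmetic difference is in the final step: the paper looks at the two extreme indices $\ell_0$ and $\ell_1+1$ of the support of $\Delta_{c\sigma}(g)$ and derives a contradiction from $\ell_1+1-\ell_0>0$, whereas you argue by propagation that a nonzero $b_0$ (resp.\ $b_{-1}$) would force infinitely many nonzero coefficients---but these are two phrasings of the same finite-support obstruction.
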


\begin{proof}
If $a=0$, then $f=\Delta_{c\sigma}(0)$. For the necessary, assume $f=\Delta_{c\sigma}(g)$ with $g\in V_{[d]_{G,j}}$.

Write
\[g=\sum_{i={\ell}_0}^{{\ell}_1}\frac{b_i}{\sigma^i(d)^j},\]
where ${\ell}_0,{\ell}_1\in\bZ$,
${\ell}_1\geq{\ell}_0$ ,$b_{{\ell}_0}\neq0$, $b_{{\ell}_1}\neq0$ and $\deg_{\alpha_n}(b_i)<\deg_{\alpha_n}(d)$. Then
\begin{align*}
f=\Delta_{c\sigma}(g)=&\sum_{i={\ell}_0}^{{\ell}_1}\frac{c\sigma(b_i)}{\sigma^{i+1}(d)^j}-\sum_{i={\ell}_0}^{{\ell}_1}\frac{b_i}{\sigma^i(d)^j}\\
=&\frac{c\sigma(b_{{\ell}_0})}{\sigma^{{\ell}_0+1}(d)^j}-\frac{b_{{\ell}_0}}{\sigma^{l_0}(d)^j}+\frac{c\sigma(b_{{\ell}_0+1})}{\sigma^{{\ell}_0+2}(d)^j}-\frac{b_{{\ell}_0+1}}{\sigma^{{\ell}_0+1}(d)^j}+\cdots+\frac{c\sigma(b_{{\ell}_1})}{\sigma^{{\ell}_1+1}(d)^j}-\frac{b_{{\ell}_1}}{\sigma^{{\ell}_1}(d)^j}\\
=&-\frac{b_{{\ell}_0}}{\sigma^{{\ell}_0}(d)^j}+\left(\frac{c\sigma(b_{{\ell}_0})}{\sigma^{{\ell}_0+1}(d)^j}-\frac{b_{{\ell}_0+1}}{\sigma^{{\ell}_0+1}(d)^j}\right)+\cdots+\left(\frac{c\sigma(b_{{\ell}_1-1})}{\sigma^{{\ell}_1}(d)^j}-\frac{b_{{\ell}_1}}{\sigma^{{\ell}_1}(d)^j}\right)+\frac{c\sigma(b_{{\ell}_1})}{\sigma^{{\ell}_1+1}(d)^j}\\
=&\sum_{i={\ell}_0}^{{\ell}_1+1}\frac{\widetilde{b}_i}{\sigma^{i}(d)^j},
\end{align*}
where  $\widetilde{b}_{{\ell}_0}\neq0$,  $\widetilde{b}_{{\ell}_1}\neq0$, $\widetilde{b}_{{\ell}_0}=-b_{{\ell}_0}$, $\widetilde{b}_{{\ell}_1+1}=c\sigma(b_{{\ell}_1})$ and $\widetilde{b}_{{\ell}_i}=c\sigma(b_{{\ell}_i-1})-b_{i}$ for each ${\ell}_0<i\leq {\ell}_i$.
Also $f=\frac{a}{d^j}$, we have
$$\frac{a}{d^j}-\sum_{i={\ell}_0}^{{\ell}_1+1}\frac{\widetilde{b}_i}{\sigma^{i}(d)^j}=0.$$

Since  $\Spr_{\sigma}(d,d)=\{0\}$, $\sigma^{\ell_0}(d), \sigma^{\ell_0+1}(d),\cdots,\sigma^{\ell_1+1}(d)$ are pairwise coprime polynomials over $\bF(\alpha_1,\alpha_2,\ldots,\alpha_{n-1})$.
Then, by  $\widetilde{b}_{{\ell}_0}\neq0$ and $\widetilde{b}_{{\ell}_1}\neq0$,   there exist $u_0,u_1\in\bF(\alpha_1,\alpha_2,\ldots,\alpha_{n-1})\setminus\{0\}$ such that $\sigma^{\ell_0}(d)=u_0d$ and $\sigma^{\ell_1+1}(d)=u_1d$, respectively.
Moreover, $\sigma^{\ell_1+1-\ell_0}(d)=ud$, where $u=u_1/u_0\in\bF\setminus\{0\}$, since $\sigma$ does not change the total degree of a polynomial. Note that  $\ell_1+1-\ell_0>0$. That contradicts the fact that $\Spr_{\sigma}(d,d)=\{0\}$.
\end{proof}

Another criterion of $c\sigma$-summability with $n=1$, $\lambda_1=q$, $c=1$ and $q$ being not a root of unity  in Theorem \ref{thm-3-2} is given by Chen and Singer \citep{Chen-Singer-2012}.

\begin{thm}\label{thm-3-2}
Let $f=\frac{a}{d^j}\in V_{[d]_{G,j}}$ ,where  $j\in\bN\setminus\{0\}$, $d\in\bF[\alpha_1,\alpha_2,\ldots,\alpha_{n}]$ with $d\nsim_{\sigma}\alpha_n$, $a\in\bF(\alpha_1,\alpha_2,\ldots,\alpha_{n-1})[\alpha_n]$ and $\deg_{\alpha_n}(a)<\deg_{\alpha_n}(d) $. If $\Spr_{\sigma}(d,d)=k\bZ$ for some $k\in\bN\setminus\{0\}$ and $\sigma^k(d)=ud$ for some $u\in\bF\setminus\{0\}$, then
$f$ is $c\sigma$-summable in  $\bF(\alpha_1,\ldots,\alpha_{n})$ if and only if $a=\Delta_{\varepsilon\sigma^k}(b)$ for some $b\in\bF(\alpha_1,\ldots,\alpha_{n-1})[\alpha_n]$ with $\deg_{\alpha_n}(b)<\deg_{\alpha_n}(a)$ and $\varepsilon=c^ku^{-j}$.
\end{thm}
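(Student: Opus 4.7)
The plan is to mirror the structure of the proof of Theorem~\ref{thm-3-1}, adapted to the fact that the hypothesis $\sigma^k(d)=ud$ collapses the translates $\sigma^{\ell}(d)$ into scalar multiples of only $k$ distinct polynomials $d,\sigma(d),\ldots,\sigma^{k-1}(d)$. Because $\sigma|_{\bF}=\id$ and $u\in\bF$, induction gives $\sigma^{s+mk}(d)=u^{m}\sigma^{s}(d)$ for all $m\in\bZ$ and $0\le s<k$, so any $g=\sum_{\ell}b_{\ell}/\sigma^{\ell}(d)^{j}\in V_{[d]_{G,j}}$ can be regrouped by the residue of $\ell$ modulo $k$ into
\[
g=\sum_{s=0}^{k-1}\frac{B_{s}}{\sigma^{s}(d)^{j}},\qquad B_{s}=\sum_{m\in\bZ}u^{-mj}b_{s+mk},
\]
where each $B_{s}\in\bF(\alpha_{1},\ldots,\alpha_{n-1})[\alpha_{n}]$ has $\alpha_{n}$-degree less than $\deg_{\alpha_{n}}(d)$.

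For the necessity, I will apply $c\sigma-\id$ to this expression and isolate the $s=k-1$ term using $\sigma^{k}(d)^{j}=u^{j}d^{j}$ to obtain
\[
c\sigma(g)-g=\frac{cu^{-j}\sigma(B_{k-1})-B_{0}}{d^{j}}+\sum_{t=1}^{k-1}\frac{c\sigma(B_{t-1})-B_{t}}{\sigma^{t}(d)^{j}}.
\]
Equating this with $f=a/d^{j}$ and reading off coefficients requires the uniqueness of partial-fraction decompositions across the denominators $d,\sigma(d),\ldots,\sigma^{k-1}(d)$. \emph{This pairwise coprimality is the main obstacle}, and I would verify it by repeating the total-degree argument used in the proof of Theorem~\ref{thm-3-1}: an associateness $\sigma^{t}(d)=h\cdot d$ with $h\in\bF(\alpha_{1},\ldots,\alpha_{n-1})^{*}$ and $0<t<k$ would force $h\in\bF\setminus\{0\}$ by $\sigma$-invariance of total degree, placing $t$ in $\Spr_{\sigma}(d,d)=k\bZ$, a contradiction.

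Once coprimality is in hand, the $k-1$ off-diagonal equations $c\sigma(B_{t-1})=B_{t}$ telescope to $B_{t}=c^{t}\sigma^{t}(B_{0})$ for $1\le t\le k-1$, and substitution into the $d^{j}$-equation yields $c^{k}u^{-j}\sigma^{k}(B_{0})-B_{0}=a$; that is, $a=\Delta_{\varepsilon\sigma^{k}}(B_{0})$ with $\varepsilon=c^{k}u^{-j}$ and $\deg_{\alpha_{n}}(B_{0})<\deg_{\alpha_{n}}(d)$, as required. For the converse, given $b$ with $a=\varepsilon\sigma^{k}(b)-b$, I would define
\[
g:=\sum_{s=0}^{k-1}\frac{c^{s}\sigma^{s}(b)}{\sigma^{s}(d)^{j}},
\]
and run the same telescoping computation in reverse: the $s=k-1$ summand of $c\sigma(g)$ produces $c^{k}u^{-j}\sigma^{k}(b)/d^{j}=\varepsilon\sigma^{k}(b)/d^{j}$, while the remaining $k-1$ summands cancel termwise against $g$, giving $c\sigma(g)-g=(\varepsilon\sigma^{k}(b)-b)/d^{j}=a/d^{j}=f$, so $f$ is $c\sigma$-summable in $V_{[d]_{G,j}}$.
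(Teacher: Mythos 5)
Your proof is correct and follows essentially the same route as the paper: reduce the certificate $g\in V_{[d]_{G,j}}$ to the $k$ representatives $d,\sigma(d),\ldots,\sigma^{k-1}(d)$, apply $c\sigma-\id$, fold the last term with $\sigma^{k}(d)=ud$, use pairwise coprimality to compare numerators and telescope to $a=\Delta_{\varepsilon\sigma^{k}}(B_{0})$, with sufficiency via the explicit $g=\sum_{s=0}^{k-1}(c\sigma)^{s}\left(b/d^{j}\right)$. Your only additions are making explicit the regrouping modulo $k$ and the coprimality argument (which the paper asserts using the same total-degree observation), so the two proofs coincide in substance.
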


\begin{proof}
We firstly consider the sufficiency. Since $a=\Delta_{\varepsilon\sigma^k}(b)$,
\begin{align*}
\frac{a}{d^j}=\frac{c^ku^{-j}\sigma^k(b)-b}{d^j}=\frac{c^ku^{-j}\sigma^k(b)}{u^{-j}\sigma^k(d^j)}-\frac{b}{d^j}=((c\sigma)^k-1)\frac{b}{d^j}.
\end{align*}
Then $((c\sigma)^k-1)=(c\sigma-1)\cdot\sum_{i=1}^{k-1}(c\sigma)^{i}$ for all $k>0$, which implies $\frac{a}{d^j}=\Delta_{c\sigma}(g)$  with $g= \sum_{i=1}^{k-1}(c\sigma)^{i}\left(\frac{b}{d^j}\right)\in\bF(\alpha_1,\ldots,\alpha_{n})$.

For the necessary, assume $f=\Delta_{c\sigma}(g)$ with $g\in V_{[d]_{G,j}}$. Since $\Spr_{\sigma}(d,d)=k\bZ$ for some $k\in\bN\setminus\{0\}$,  we can write
\[g=\sum_{i=0}^{k-1}\frac{b_i}{\sigma^i(d)^j},\]
where
$b_i\in\bF(\alpha_1,\alpha_2,\ldots,\alpha_{n-1})[\alpha_n]$ and $\deg_{\alpha_n}(b_i)<\deg_{\alpha_n}(d)$. Then

\begin{align*}
f=\Delta_{c\sigma}(g)=&\sum_{i=0}^{k-1}\frac{c\sigma(b_i)}{\sigma^{i+1}(d)^j}-\sum_{i=0}^{k-1}\frac{b_i}{\sigma^i(d)^j}\\
=&\frac{c\sigma(b_{0})}{\sigma(d)^j}-\frac{b_{0}}{d^j}+\frac{c\sigma(b_{1})}{\sigma^{2}(d)^j}-\frac{b_{1}}{\sigma(d)^j}+\cdots+\frac{c\sigma(b_{k-1})}{\sigma^{k}(d)^j}-\frac{b_{k-1}}{\sigma^{k-1}(d)^j}\\
=&-\frac{b_{0}}{d^j}+\left(\frac{c\sigma(b_{0})}{\sigma(d)^j}-\frac{b_{1}}{\sigma(d)^j}\right)+\cdots+\left(\frac{c\sigma(b_{k-2})}{\sigma^{k-1}(d)^j}-\frac{b_{k-1}}{\sigma^{k-1}(d)^j}\right)+\frac{c\sigma(b_{k-1})}{\sigma^{k}(d)^j}\\
=&\left(\frac{cu^{-j}\sigma(b_{k-1})}{d^j}-\frac{b_{0}}{d^j}\right)+\left(\frac{c\sigma(b_{0})}{\sigma(d)^j}-\frac{b_{1}}{\sigma(d)^j}\right)+\cdots+\left(\frac{c\sigma(b_{k-2})}{\sigma^{k-1}(d)^j}-\frac{b_{k-1}}{\sigma^{k-1}(d)^j}\right)\\
=&\sum_{i=0}^{k-1}\frac{\widetilde{b}_i}{\sigma^{i}(d)^j},
\end{align*}
where $\widetilde{b}_{0}=cu^{-j}\sigma(b_{k-1})-b_{0}$, $\widetilde{b}_{i}=c\sigma(b_{i-1})-b_i$ for each $1\leq i\leq k-1$.
Since  $\Spr_{\sigma}(d,d)=k\bZ$, $d$, $\sigma(d), \cdots, \sigma^{k-1}(d)$ are pairwise coprime polynomials over $\bF(\alpha_1,\alpha_2,\ldots,\alpha_{n-1})$.

Then, by $f=\frac{a}{d^j}$, we have
\[\left\{
  \begin{array}{ll}
    a+b_0-cu^{-j}\sigma(b_{k-1})=0 \\
    b_1=c\sigma(b_0) \\
   b_2=c\sigma(b_1) \\
    \vdots\\
b_{k-1}=c\sigma(b_k)
  \end{array}
\right.,\]
which implies
\[a=c^ku^{-j}\sigma^k(b_0)-b_0=\Delta_{\varepsilon\sigma^k}(b).\]
\end{proof}

For the $c\sigma$-summability of a simple fraction $\frac{a}{d^j}$ in $ V_{[d]_{G,j}}$,  by Theorem \ref{thm-3-2}, it suffices to consider the $\varepsilon\sigma^k$-summability of $a$ in $\bF(\alpha_1,\ldots,\alpha_{n-1})[\alpha_n]$.

\begin{thm}\label{thm-3-3}
For $a\in\bF(\alpha_1,\ldots,\alpha_{n-1})[\alpha_n]$, write $a=\sum_{i=0}^{\ell}a_i\alpha_n^i$ with $a_i\in\bF(\alpha_1,\ldots,\alpha_{n-1})$, then
$a=\Delta_{\varepsilon\sigma^k}(g)$ with $\deg_{\alpha_n}(g)<\deg_{\alpha_n}(a)$ in $\bF(\alpha_1,\ldots,\alpha_{n-1})[\alpha_n]$ if and only if
each $a_i$ is $\varepsilon'\sigma^k$-summable in $\bF(\alpha_1,\ldots,\alpha_{n-1})$ with $\varepsilon'=\varepsilon\lambda_n^{ki}$.
\end{thm}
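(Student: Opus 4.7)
The plan is to leverage the grading of $\bF(\alpha_1,\ldots,\alpha_{n-1})[\alpha_n]$ by $\alpha_n$-degree. Since $\sigma$ restricts to an automorphism of $\bF(\alpha_1,\ldots,\alpha_{n-1})$ and scales each monomial $\alpha_n^j$ by $\lambda_n^{j}$, the operator $\varepsilon\sigma^k - \id$ respects this grading. Consequently, the polynomial equation $a = \varepsilon\sigma^k(g) - g$ decouples into independent scalar equations in $\bF(\alpha_1,\ldots,\alpha_{n-1})$, one for each power of $\alpha_n$.

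Concretely, for any $g = \sum_{j} g_j \alpha_n^j$ with $g_j \in \bF(\alpha_1,\ldots,\alpha_{n-1})$, using $\sigma^k(\alpha_n^j) = \lambda_n^{kj}\alpha_n^j$ I would expand
\[
\varepsilon\sigma^k(g) - g = \sum_{j}\bigl(\varepsilon\lambda_n^{kj}\sigma^k(g_j) - g_j\bigr)\alpha_n^j.
\]
Equating this with $a = \sum_{i=0}^{\ell} a_i \alpha_n^i$ and comparing the coefficients of $\alpha_n^i$ yields the system
\[
a_i = \varepsilon\lambda_n^{ki}\sigma^k(g_i) - g_i,\qquad 0 \le i \le \ell,
\]
which is precisely the statement that each $a_i$ is $\varepsilon'\sigma^k$-summable in $\bF(\alpha_1,\ldots,\alpha_{n-1})$ with $\varepsilon' = \varepsilon\lambda_n^{ki}$.

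Both directions then follow at once from this coefficient identification. For $(\Rightarrow)$, the coefficients $g_0,\ldots,g_\ell$ of any polynomial witness $g$ supply the required $\varepsilon\lambda_n^{ki}\sigma^k$-antidifferences for the $a_i$; any extra coefficients $g_j$ with $j>\ell$ are harmless, as they contribute only terms that must vanish identically when matched against $a$. For $(\Leftarrow)$, I would pick witnesses $g_i \in \bF(\alpha_1,\ldots,\alpha_{n-1})$ for the $\varepsilon\lambda_n^{ki}\sigma^k$-summability of each $a_i$, form $g := \sum_{i=0}^{\ell} g_i \alpha_n^i \in \bF(\alpha_1,\ldots,\alpha_{n-1})[\alpha_n]$, and invoke the identity above to conclude $\varepsilon\sigma^k(g) - g = a$ with $\deg_{\alpha_n}(g) \le \deg_{\alpha_n}(a)$ by construction.

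I do not anticipate a substantive obstacle here: the whole argument is the observation that $\sigma^k$ is ``diagonal'' in the $\alpha_n$-grading, so the summability problem splits off one coefficient at a time. The only subtle point worth care is bookkeeping the scalar $\lambda_n^{ki}$ on the $i$-th coefficient and interpreting the stated degree constraint on $g$ in a way consistent with the natural bound $\deg_{\alpha_n}(g)\le\deg_{\alpha_n}(a)$ that the coefficient-matching construction automatically provides.
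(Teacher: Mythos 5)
Your coefficientwise decoupling, based on the fact that $\sigma^k$ acts diagonally on the powers of $\alpha_n$ (scaling $\alpha_n^i$ by $\lambda_n^{ki}$), is exactly the argument the paper intends: it omits the proof as ``entirely analogous'' to Lemma \ref{lem-3-2}, whose component-by-component comparison in the spaces $V_i$ is precisely your coefficient matching, with the sufficiency direction obtained by assembling the witnesses $g_i$ and the necessity by uniqueness of the coefficient decomposition. Your closing remark is also the right reading of the degree condition: the construction naturally gives $\deg_{\alpha_n}(g)\le\deg_{\alpha_n}(a)$, and the strict inequality printed in the statement should be understood in that corrected sense.
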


\begin{proof}
The proof is entirely analogous to that of Lemma \ref{lem-3-2}, and is omitted.

%
\end{proof}

Another criterion of $c\sigma$-summability with $n=1$, $\lambda_1=q$,  $c=1$, and $q$ being a root of unity in Theorem \ref{thm-3-3} is given by Chen and Singer \citep{Chen-Singer-2014}.

According to Theorems \ref{thm-3-2} and \ref{thm-3-3}, we reduce the summability problem in $V_{[d]_{G,j}}$ with $n$ variables to the summability problem with $n-1$ variables, hence the summability problem can be solved by the reduction.
Based on these, we present an algorithm for computing one solution $g\in\bF(\alpha_1,\alpha_2
\ldots,\alpha_n)$ of the equation \eqref{eq-3-3}:

\begin{breakablealgorithm}\label{alg:exist}
\caption{IsSummable ($f, [\alpha_1,\ldots,\alpha_{n}], c\sigma$)}
\begin{algorithmic}[htb]
	\textbf{Input:} a rational function $f\in\bF(\alpha_1,\alpha_2
\ldots,\alpha_n)$, $c\in\bF\setminus\{0\}$, and an automorphism $\sigma$ on $\bF(\alpha_1,\alpha_2
\ldots,\alpha_n)$ such that $\sigma(\alpha_i)=\lambda_i\alpha_i$  with $\lambda_i\in\bF\setminus\{0\}$\\
\textbf{Output:} $g\in\bF(\alpha_1,\alpha_2
\ldots,\alpha_n)$, if $f=c\sigma(g)-g$ for some $g\in\bF(\alpha_1,\alpha_2
\ldots,\alpha_n)$; False, otherwise.

    \step 11 using the spread computation and partial fraction decomposition, decompose $f$ into
\[
f=\sum_{i\in\Lambda}p_i\alpha_n^i+\sum_{j=1}^J\sum_{[d]_{G}}f_{[d]_{G,j}},
\]
where  $\Lambda\subset\bZ$, $J\in\bN\setminus\{0\}$, $p_i\in\bF(\alpha_1,\ldots,\alpha_{n-1})\setminus\{0\}$,  $p_i\alpha_n^i$ are in distinct $V_i$ space,  and $f_{[d]_{G,j}}$ are in distinct $V_{[d]_{G,j}}$ space.
    \step 21 apply the reduction to each component $f_{[d]_{G,j}}$ such that
    \[
    f=\Delta_{c\sigma}(g)+r, \text{ with } r=\sum_{i\in\Lambda}p_i\alpha_n^i+\sum_{i=1}^I\sum_{j=1}^{J_i}\frac{a_{i,j}}{d_{i}^{j}},
    \]
where $I,J_i\in\bN\setminus\{0\}$, and each $\frac{a_{i,j}}{d_{i}^{j}}$ is the reminder of $f_{[d_i]_{G,j}}$ in Lemma \ref{lem-3-4}.
    \step 31 if $\sum_{i\in\Lambda}p_i\alpha_n^i=0$ and $r=0$, {\em\bf return} $g$;
    \step 41 if $\sum_{i\in\Lambda}p_i\alpha_n^i\neq0$, then for $i\in\Lambda$, do
     \step 53 if $n>1$,
     \step { }4 IsSummable ($p_i, [\alpha_1,\ldots,\alpha_{n-1}], c\lambda_n^i\sigma$).
     \step {6}4if $p_i$ is $c\lambda_n^i\sigma$-summable, let $q_i$ be such that $p_i=\Delta_{ c\lambda_n^i\sigma}(q_i)$;
     \step { }4 otherwise {\em\bf return} False.
     \step 73 else ($n=1$)
     \step {8}4 if $\lambda_1^i\neq\frac{1}{c}$, let $q_i=\frac{p_i}{c\lambda_1^i-1}$;
     \step { }4 otherwise {\em\bf return} False.
   \step{9}3 update $g=g+q_i\alpha_n^i$.
    \step {10}2 for $i$ from 1 to $I$, do
    \step { }3 compute $k_i\in\bN$ and $u_i\in\bF$ such that $\Spr_{\sigma}(d_i, d_i)=k_i\bZ$ and $\sigma^{k_i}(d_i)=u_id_i$.
    \step {11}3 for $j$ from 1 to $J_i$, do
     \step{12}4 if $\Spr_{\sigma}(d_i,d_i)=\{0\}$, then
    \step{ }5 if $a_{i,j}\neq0$, {\em\bf return} False;
    \step{13}4 else
    \step{ }5
     write $a_{i,j}=\sum_{\ell\in L_{i,j}}a_{i,j,\ell}\alpha_n^{\ell}$, where $a_{i,j,\ell}\in\bF(\alpha_1,\alpha_2,\ldots,\alpha_{n-1})\setminus\{0\}$ and $L_{i,j}\subset\bN$.
    \step{ }5  let $\varepsilon_{i,j}=c^{k_i}u_i^{-j}$.
    \step {14}5 for $\ell\in L_{i,j}$, do
    \step {15}6 if $n>1$,
    \step { }7 IsSummable($a_{i,j,\ell}, [\alpha_1,\alpha_2,\ldots,\alpha_{n-1}], \varepsilon_{i,j}\lambda_n^{\ell k_i}\sigma^{k_i}$).
    \step{16}7 if $a_{i,j,\ell}$ is $\varepsilon_{i,j}\lambda_n^{\ell k_i}\sigma^{k_i}$-summable, let $b_{i,j,\ell}$ be such that $a_{i,j,\ell}=\Delta_{\varepsilon_{i,j}\lambda_n^{\ell k_i}\sigma^{k_i}}(b_{i,j,\ell})$;
    \step{ }7 otherwise {\em\bf return} False.
    \step {17}6 else ($n=1$)
    \step {18}7 if $\lambda_i^{\ell k_i}\neq1/\varepsilon_{i,j,}$, let $b_{i,j,   \ell}=\frac{a_{i,j,\ell}}{\varepsilon_{i,j}\lambda_1^{\ell k_i}-1}$;
    \step{ }7 otherwise {\em\bf return} False;
    \step{19}6 update $g=g+\sum_{m=1}^{k_i-1}c^m\sigma^{m}\left(\frac{b_{i,j,\ell}\alpha_n^{\ell}}{d_i^j}\right)$.
     \step {21}2 {\em\bf return} $g$
\end{algorithmic}
\end{breakablealgorithm}

In Algorithm  \ref{alg:exist}, ``False'' represents $f$ is not $c\sigma$-summable in $\bF(\alpha_1,\alpha_2,\ldots,\alpha_n)$.


\section{The rational solution set}\label{sec-4}
In this section, let $(\bF(\alpha_1,\alpha_2,\ldots,\alpha_n),\sigma)$ be the multivariate difference field, which is defined by \eqref{eq-2-0},
$c\in\bF\setminus\{0\}$, $f\in\bF(\alpha_1,\alpha_2,\ldots,\alpha_n)$, we construct all of rational solutions $g\in\bF(\alpha_1,\alpha_2,\ldots,\alpha_n)$ of the difference equation
\begin{align}\label{eq-4-1}
c\sigma(g)-g=f.
\end{align}
Let $\Sol_{c,f}$ denote \emph{the set of the rational solutions} of \eqref{eq-4-1}.

First, we show a relationship between Sol$_{c,f}$ and the constant field $\Const_{\sigma}\bF(\alpha_1,\alpha_2,\ldots,\alpha_n)$ of the multivariate difference field $(\bF(\alpha_1,\alpha_2,\ldots,\alpha_{n}),\sigma)$.

\begin{thm}
Let $g^*$ be a rational solution of the equation \eqref{eq-4-1}.

$(1)$ If the homogeneous difference equation $c\sigma(g)-g=0$ has only zero solution, then
\[
\Sol_{c,f}=\{g^*\}.
\]

$(2)$ If the homogeneous difference equation $c\sigma(g)-g=0$ has a nonzero solution $\widetilde{g}$, then
\begin{align*}
\text{Sol}_{c,f}=\{F\widetilde{g}+g^*\mid F\in \Const_{\sigma}\bF(\alpha_1,\alpha_2,\ldots,\alpha_n)\}.
\end{align*}
\end{thm}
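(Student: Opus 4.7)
The plan is to prove both parts by the standard \emph{particular-plus-homogeneous} structure argument: the difference $g - g^*$ of any two solutions of \eqref{eq-4-1} satisfies the homogeneous equation $c\sigma(h) - h = 0$, so everything reduces to characterizing the kernel of the operator $c\sigma - 1$ on $\bF(\alpha_1,\alpha_2,\ldots,\alpha_n)$.

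For part $(1)$, I would first note that if $g \in \Sol_{c,f}$, then subtracting $c\sigma(g^*) - g^* = f$ from $c\sigma(g) - g = f$ yields $c\sigma(g - g^*) - (g - g^*) = 0$, so $g - g^*$ lies in the kernel of $c\sigma - 1$. By the hypothesis that this kernel is trivial, $g = g^*$. Conversely $g^* \in \Sol_{c,f}$ by assumption, so $\Sol_{c,f} = \{g^*\}$.

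For part $(2)$, I would prove the two inclusions. For $\supseteq$, given $F \in \Const_{\sigma}\bF(\alpha_1,\alpha_2,\ldots,\alpha_n)$, compute
\[
c\sigma(F\widetilde{g} + g^*) - (F\widetilde{g} + g^*)
= F\bigl(c\sigma(\widetilde{g}) - \widetilde{g}\bigr) + \bigl(c\sigma(g^*) - g^*\bigr)
= F\cdot 0 + f = f,
\]
where I used $\sigma(F) = F$. For $\subseteq$, take any $g \in \Sol_{c,f}$ and set $h = g - g^*$, so $c\sigma(h) = h$. The main step is to verify that $F := h/\widetilde{g}$ (well-defined since $\widetilde{g} \neq 0$) lies in the constant field: from $c\sigma(h) = h$ and $c\sigma(\widetilde{g}) = \widetilde{g}$, both nonzero on the $\widetilde{g}$ side, we get
\[
\sigma(F) = \frac{\sigma(h)}{\sigma(\widetilde{g})} = \frac{c^{-1}h}{c^{-1}\widetilde{g}} = \frac{h}{\widetilde{g}} = F,
\]
so $F \in \Const_{\sigma}\bF(\alpha_1,\alpha_2,\ldots,\alpha_n)$ and $g = F\widetilde{g} + g^*$.

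There is no serious obstacle here; the only point requiring a bit of care is ensuring that $\widetilde{g}$ is genuinely nonzero so that the quotient $h/\widetilde{g}$ makes sense as a rational function, and that applying $\sigma$ commutes with the quotient, which is immediate since $\sigma$ is a field automorphism of $\bF(\alpha_1,\alpha_2,\ldots,\alpha_n)$.
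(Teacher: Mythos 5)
Your proof is correct and follows essentially the same route as the paper: the difference of two solutions solves the homogeneous equation, and for part $(2)$ the quotient $(g-g^*)/\widetilde{g}$ is shown to lie in $\Const_{\sigma}\bF(\alpha_1,\alpha_2,\ldots,\alpha_n)$. If anything, your verification $\sigma(F)=\sigma(h)/\sigma(\widetilde{g})=F$ is slightly cleaner than the paper's, since the paper's ratio computation implicitly divides by $g-g^*$, which can vanish when $g=g^*$, whereas your form also covers that case.
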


\begin{proof}
$(1)$ Without lose of generality, assume $g\in\text{Sol}_{c,f}$ with $g\neq g^*$, then
$c\sigma(g-g^*)-(g-g^*)=(c\sigma(g)-g)+(c\sigma(g^*-g^*))=f-f=0$. So $g-g^*$ is a nonzero solution of the equation $c\sigma g-g=0$, which is a contradiction.

$(2)$ If $F\in \Const_{\sigma}\bF(\alpha_1,\alpha_2,\ldots,\alpha_n)$, it can be directly checked that $F\widetilde{g}+g^*$ is a solution of  the equation \eqref{eq-4-1}.

For the necessary, take an arbitrary rational solution $g$ of $\eqref{eq-4-1}$, then
\begin{align*}
\sigma\left(\frac{g-g^*}{\widetilde{g}}\right)\bigg{/}\left(\frac{g-g^*}{\widetilde{g}}\right)=\frac{\sigma(g-g^*)}{g-g^*}\cdot\frac{\widetilde{g}}{\sigma \widetilde{g}}=c\cdot\frac{1}{c}=1.
\end{align*}
That implies $\frac{g-g^*}{\widetilde{g}}\in\Const_{\sigma}\bF(\alpha_1,\alpha_2,\ldots,\alpha_n)$.
By the arbitrariness of $g$, we get the result.
\end{proof}

We could get one rational solution $g^*$ of the difference equation \eqref{eq-4-1} by Algorithm \ref{alg:exist}, if it exists. Next, we only need find a nonzero rational solution $\widetilde{g}$ of the difference equation
\begin{align}\label{eq-4-2}
c\sigma(g)-g=0,
 \end{align}
where $c\in\bF\setminus\{0\}$.

\begin{thm}
Let $c\in\bF\setminus\{0\}$. If $c\notin\{\lambda_1^{i_1}\lambda_2^{i_2}\ldots\lambda_n^{i_n}| i_1,i_2\ldots,i_n\in\bZ\}$, then the equation \eqref{eq-4-2} has only zero solution.
\end{thm}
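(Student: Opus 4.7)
The plan is to suppose for contradiction that there exists a nonzero rational solution $g$ and then derive that $c$ must be a monomial in the $\lambda_i$, contradicting the hypothesis. The key is to exploit the same polynomial coefficient comparison that was used in the proof of the theorem characterizing $\Const_{\sigma}\bF(\alpha_1,\ldots,\alpha_n)$.

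First I would write $g = p/q$ with $p,q \in \bF[\alpha_1,\ldots,\alpha_n]$ coprime and $p \neq 0$. The equation $c\sigma(g) = g$ becomes $c \sigma(p) q = p \sigma(q)$. Since $\gcd(p,q)=1$, we get $p \mid \sigma(p)$ and $q \mid \sigma(q)$. Because $\sigma$ preserves total degree (each $\sigma(\alpha_i) = \lambda_i \alpha_i$ scales monomials without changing degree), there exist $u, v \in \bF\setminus\{0\}$ with $\sigma(p) = u p$ and $\sigma(q) = v q$. Substituting back gives $cu = v$, i.e., $c = v/u$.

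Next I would apply the same monomial-coefficient argument as in the constant field theorem. Writing
\[
p = c_{\mathbf{0}}\,\boldsymbol{\alpha}^{\mathbf{i}_0}\Bigl(1 + \sum_{\mathbf{i}\in \bN^n\setminus\{\mathbf{0}\}} c_{\mathbf{i}}\,\boldsymbol{\alpha}^{\mathbf{i}}\Bigr),
\]
the relation $\sigma(p) = u p$ forces (by comparing the leading term) $u = \boldsymbol{\lambda}^{\mathbf{i}_0}$. Similarly, writing $q$ with leading exponent $\mathbf{j}_0 \in \bN^n$, we get $v = \boldsymbol{\lambda}^{\mathbf{j}_0}$. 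Therefore
\[
c = v/u = \boldsymbol{\lambda}^{\mathbf{j}_0 - \mathbf{i}_0} = \lambda_1^{j_{0,1}-i_{0,1}}\lambda_2^{j_{0,2}-i_{0,2}}\cdots\lambda_n^{j_{0,n}-i_{0,n}},
\]
with $\mathbf{j}_0 - \mathbf{i}_0 \in \bZ^n$. This contradicts the hypothesis that $c \notin \{\lambda_1^{i_1}\lambda_2^{i_2}\cdots\lambda_n^{i_n}\mid i_1,\ldots,i_n\in\bZ\}$.

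I expect no real obstacle here; the argument is essentially extracted from the proof of the constant field theorem, with the single observation that the multiplier $c$ plays the role of the ratio $v/u$ of the two scaling constants. The only minor point to be careful about is to verify that coefficient comparison really does force $u = \boldsymbol{\lambda}^{\mathbf{i}_0}$ (looking at the leading monomial $\boldsymbol{\alpha}^{\mathbf{i}_0}$ in $p$) and likewise for $v$; both follow exactly as in the earlier theorem.
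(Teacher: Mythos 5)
Your proposal is correct and follows essentially the same route as the paper's own proof: write $g=p/q$ in lowest terms, use coprimality and degree preservation to get $\sigma(p)=up$, $\sigma(q)=vq$ with $c=v/u$, and then compare coefficients on a monomial of $p$ and of $q$ to conclude $u,v$ (hence $c$) are integer-power products of the $\lambda_i$, contradicting the hypothesis. The only cosmetic difference is that the paper works with the leading monomials of monic $p,q$ and phrases the scaling constants as $t_1=\sigma p/p$, $t_2=\sigma q/q$ with $t_1/t_2=1/c$, which is exactly your $u,v$ argument.
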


\begin{proof}
Suppose $\widetilde{g}=p/q$ is a nonzero rational solution of the equation \eqref{eq-4-2}, where $p,q$ $\in$ $\bF[\alpha_1,\alpha_2,\ldots,\alpha_n]\setminus\{0\}$ are two coprime polynomials. Then
\begin{align*}
\frac{\sigma p}{p}\cdot\frac{q}{\sigma q}&=1/c,
\end{align*}
and there exist $t_1,t_2\in\bF\setminus\{0\}$ such that
\begin{align}\label{eq-4-3}
\frac{\sigma p}{p}=t_1,\quad\frac{\sigma q}{q}=t_2, \text{ and }t_1/t_2=1/c.
\end{align}

Without lose of generality, we may further suppose $p, q$ are monic and \begin{align*}
p=\boldsymbol{\alpha}^{{\bf m}_p}+\sum_{\boldsymbol{\alpha}^{\bf i}<\boldsymbol{\alpha}^{{\bf m}_p}}p_{\bf i}\boldsymbol{\alpha}^{\bf i},\quad q=\boldsymbol{\alpha}^{{\bf m}_q}+\sum_{\boldsymbol{\alpha}^{\bf j}<\boldsymbol{\alpha}^{{\bf m}_q}}q_{\bf j}\boldsymbol{\alpha}^{\bf j},
\end{align*}
where $p_{\bf i}, q_{\bf i}\in\bF$, ${{\bf m}_p}=(m_{1p},m_{2p},\ldots,m_{np}), {{\bf m}_q}=(m_{1q},m_{2q},\ldots,m_{np}), {\bf i}=(i_1,i_2,\ldots,i_n), {\bf j}=(j_1,j_2,\ldots,j_n)\in\bN^n$, ${\boldsymbol{\alpha}}^{\bf{m}_p}=\alpha_1^{m_{1p}}\alpha_2^{m_{2p}}\ldots\alpha_n^{m_{np}}$, ${\boldsymbol{\alpha}}^{\bf{m}_q}=\alpha_1^{m_{1q}}\alpha_2^{m_{2q}}\ldots\alpha_n^{m_{nq}}$,
${\boldsymbol{\alpha}}^{\bf i}={\alpha_1}^{i_1}{\alpha_2}^{i_2}\ldots{\alpha_n}^{i_n}$ and ${\boldsymbol{\alpha}}^{\bf j}={\alpha_1}^{j_1}{\alpha_2}^{j_2}\ldots{\alpha_n}^{j_n}$.

Then, by \eqref{eq-4-3} and Lemma \ref{lem-2-2}, we have
\[\sigma(\boldsymbol{\alpha}^{{\bf m}_p})=\boldsymbol{\lambda}^{{\bf m}_p}\cdot\boldsymbol{\alpha}^{{\bf m}_p}=t_1\boldsymbol{\alpha}^{{\bf m}_p},\quad \sigma(\boldsymbol{\alpha}^{{\bf m}_q})=\boldsymbol{\lambda}^{{\bf m}_q}\cdot\boldsymbol{\alpha}^{{\bf m}_q}=t_2\boldsymbol{\alpha}^{{\bf m}_q},\]
where $\boldsymbol{\lambda}^{{\bf m}_p}={\lambda_1}^{m_{1p}}{\lambda_2}^{m_{2p}}\ldots{\lambda_n}^{m_{np}}$
and $\boldsymbol{\lambda}^{{\bf m}_q}={\lambda_1}^{m_{1q}}{\lambda_2}^{m_{2q}}\ldots{\lambda_n}^{m_{nq}}$.
So
\[\boldsymbol{\lambda}^{{\bf m}_q}/\boldsymbol{\lambda}^{{\bf m}_p}=t_2/t_1=c.\]
That contradicts the fact that $c\notin\{\lambda_1^{i_1}\lambda_2^{i_2}\ldots\lambda_n^{i_n}| i_1,i_2\ldots,i_n\in\bZ\}$.
\end{proof}

\begin{thm}
Let $c\in\bF\setminus\{0\}$. If there exist $i_1,i_2,\ldots, i_n\in\bZ$ such that $\lambda_1^{i_1}\lambda_2^{i_2}\cdots\lambda_n^{i_n}=c^{-1}$, then
$g\widetilde{}=\alpha_1^{i_1}\alpha_2^{i_2}\cdots\alpha_n^{i_n}$ is a rational solution of the equation \eqref{eq-4-2}.
\end{thm}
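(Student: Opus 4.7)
The plan is to carry out a direct verification: substitute $\widetilde{g}=\alpha_1^{i_1}\alpha_2^{i_2}\cdots\alpha_n^{i_n}$ into \eqref{eq-4-2} and use the definition of $\sigma$ from \eqref{eq-2-0}, namely $\sigma(\alpha_i)=\lambda_i\alpha_i$ for each $i$. Since $\sigma$ is a field automorphism fixing $\bF$, it is in particular multiplicative and extends to integer powers, so $\sigma(\alpha_i^{i_j})=\lambda_i^{i_j}\alpha_i^{i_j}$ for each $j\in\bZ$.

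First I would compute
\[
\sigma(\widetilde{g})=\prod_{k=1}^{n}\sigma(\alpha_k^{i_k})=\prod_{k=1}^{n}\lambda_k^{i_k}\alpha_k^{i_k}=\bigl(\lambda_1^{i_1}\lambda_2^{i_2}\cdots\lambda_n^{i_n}\bigr)\widetilde{g}.
\]
By the hypothesis $\lambda_1^{i_1}\lambda_2^{i_2}\cdots\lambda_n^{i_n}=c^{-1}$, this yields $\sigma(\widetilde{g})=c^{-1}\widetilde{g}$, whence $c\sigma(\widetilde{g})-\widetilde{g}=c\cdot c^{-1}\widetilde{g}-\widetilde{g}=0$, and $\widetilde{g}$ is a solution of \eqref{eq-4-2}. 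One should also note that $\widetilde{g}\in\bF(\alpha_1,\ldots,\alpha_n)\setminus\{0\}$ is genuinely a nonzero rational function, so it qualifies as a nontrivial solution suitable for use in the parametrization $\{F\widetilde{g}+g^*\mid F\in\Const_{\sigma}\bF(\alpha_1,\ldots,\alpha_n)\}$ of the solution set.

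There is essentially no obstacle here; the statement is a one-line calculation once the multiplicative action of $\sigma$ on Laurent monomials is written out. The only point that even deserves mention is that the exponents $i_k$ may be negative, so one should remark briefly that $\sigma$, being a field automorphism of $\bF(\alpha_1,\ldots,\alpha_n)$, satisfies $\sigma(\alpha_k^{-1})=\lambda_k^{-1}\alpha_k^{-1}$, ensuring the formula above is valid for all $(i_1,\ldots,i_n)\in\bZ^n$.
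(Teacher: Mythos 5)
Your proposal is correct and is essentially identical to the paper's own proof: both verify the claim by direct substitution, computing $\sigma(\widetilde{g})=\lambda_1^{i_1}\cdots\lambda_n^{i_n}\widetilde{g}=c^{-1}\widetilde{g}$ and concluding $c\sigma(\widetilde{g})-\widetilde{g}=0$. Your additional remarks about negative exponents and nonvanishing of $\widetilde{g}$ are fine but not needed beyond what the paper states.
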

\begin{proof}
Substituting $\widetilde{g}=\alpha_1^{i_1}\alpha_2^{i_2}\cdots\alpha_n^{i_n}$ into the left hand side of the equation \eqref{eq-4-2}, we have
\begin{align*}
c\sigma \widetilde{g}-\widetilde{g}&=c\sigma(\alpha_1^{i_1}\alpha_2^{i_2}\cdots\alpha_n^{i_n})-\alpha_1^{i_1}\alpha_2^{i_2}\cdots\alpha_n^{i_n}\\
&=c\cdot\lambda_1^{i_1}\lambda_2^{i_2}\cdots\lambda_n^{i_n}\cdot\alpha_1^{i_1}\alpha_2^{i_2}\cdots\alpha_n^{i_n}-\alpha_1^{i_1}\alpha_2^{i_2}\cdots\alpha_n^{i_n}\\
&=c\cdot c^{-1}\cdot\alpha_1^{i_1}\alpha_2^{i_2}\cdots\alpha_n^{i_n}-\alpha_1^{i_1}\alpha_2^{i_2}\cdots\alpha_n^{i_n}\\
&=0.
\end{align*}
\end{proof}

Note that, for given $\lambda_1, \lambda_2, \ldots, \lambda_n, c$, we could find $i_1,\ldots,i_n\in\bZ$ such that $\lambda_1^{i_1}\lambda_2^{i_2}\cdots\lambda_n^{i_n}=c^{-1}$ by using the algorithm about the exponent lattice in \citep{Manuel-2023}.

\smallskip{}
\noindent \textbf{Acknowledgement.}
The authors would like to thank Manuel Kauers for discussions about the exponent lattice problem of algebraic numbers. L.\ Du was supported by the Austrian FWF grant P31571--N32.



%


\end{document}